\newtheorem{lemma}{Lemma}[section]
\newtheorem{theorem}[lemma]{Theorem}
\newtheorem{corollary}[lemma]{Corollary}
\newtheorem{proposition}[lemma]{Proposition}
\theoremstyle{definition}
\newtheorem{remark}[lemma]{Remark}
\newtheorem{example}[lemma]{Example}
\newtheorem{question}[lemma]{Question}
\newcommand{\define}[1]{{\bfseries\itshape #1}}
\newcommand{\norm}[1]{\ensuremath{\| #1 \|}}
\newcommand{\xqed}[1]{%
  \leavevmode\unskip\penalty9999 \hbox{}\nobreak\hfill
  \quad\hbox{\ensuremath{#1}}}
\renewcommand{\theequation}%
{\arabic{section}.\arabic{lemma}.\arabic{equation}}
\newcommand{\relphantom}[1]{\mathrel{\phantom{#1}}}
\newcommand{\CC}{\ensuremath{\mathbb{C}}} 
\newcommand{\NN}{\ensuremath{\mathbb{N}}} 
\newcommand{\PP}{\ensuremath{\mathbb{P}}} 
\newcommand{\RR}{\ensuremath{\mathbb{R}}} 
\newcommand{\ZZ}{\ensuremath{\mathbb{Z}}} 
\newcommand{\sF}{\ensuremath{\mathscr{F}}} 
\newcommand{\sI}{\ensuremath{\kern -1pt \mathscr{I}\kern -2pt}} 
\newcommand{\sJ}{\ensuremath{\kern -2pt \mathscr{J}\kern -2pt}} 
\newcommand{\sO}{\ensuremath{\mathscr{O}}} 
\newcommand{\sOX}{\ensuremath{\mathscr{O}^{}_{\! X}}} 
\newcommand{\sOXp}{\ensuremath{\mathscr{O}^{}_{\! X'}}} 
\newcommand{\KX}{\ensuremath{K^{}_{X}}} 
\newcommand{\KXp}{\ensuremath{K^{}_{X'}}} 
\renewcommand{\geq}{\geqslant}
\renewcommand{\leq}{\leqslant}
\DeclareMathOperator{\Bl}{Bl}
\DeclareMathOperator{\codim}{codim}
\DeclareMathOperator{\mult}{mult}
\DeclareMathOperator{\Nef}{Nef}
\DeclareMathOperator{\Pic}{Pic}
\DeclareMathOperator{\sTor}{{\mathcal{T}\!\!\textit{or}}}
\DeclareMathOperator{\Spec}{Spec}
\DeclareMathOperator{\Sym}{Sym}
\DeclareMathOperator{\Supp}{Supp}
\begin{document}

\title[Vanishing Theorems]{Vanishing Theorems and the Multigraded Regularity\\
  of Nonsingular Subvarieties}

\author[V.~Lozovanu]{Victor Lozovanu}

\author[G.G.~Smith]{Gregory G. Smith}

\address{Department of Mathematics and Statistics \\ Queen's
  University\\ Kingston \\ ON \\ K7L~3N6\\ Canada}
\email{\href{mailto:vicloz@mast.queensu.ca}{vicloz@mast.queensu.ca}, \href{mailto:ggsmith@mast.queensu.ca}{ggsmith@mast.queensu.ca}}

\subjclass[2010]{14F17,	14Q20, 	14M10}


\begin{abstract}
  $\!$ Given scheme-theoretic equations for a nonsingular subvariety, we prove
  that the higher cohomology groups for suitable twists of the corresponding
  ideal sheaf vanish.  From this result, we obtain linear bounds on the
  multigraded Castelnuovo-Mumford regularity of a nonsingular subvariety, and
  new criteria for the embeddings by adjoint line bundles to be projectively
  normal.  A special case of our work recovers the vanishing theorem of Bertram,
  Ein, and Lazarsfeld.
\end{abstract}

\maketitle

\section{Introduction} 
\label{sec:intro}

\noindent
In higher dimensional geometry, vanishing theorems are indispensable for
uncovering the deeper relations between the geometry of a subvariety and its
defining equations.  The main result of this paper, a new vanishing theorem for
certain twists of the ideal sheaf of a nonsingular subvariety, adds to this
framework.  Indeed, our vanishing theorem leads to linear bounds on the
multigraded Castelnuovo-Mumford regularity of nonsingular subvarieties and new
conditions for the surjectivity of the multiplication maps between global
sections of adjoint line bundles.  Our techniques also yield a new
Griffiths-type vanishing theorem for vector bundles.

Let $X$ be a nonsingular complex projective variety with canonical line bundle
$\KX$.  For a nonsingular subvariety $Y \subseteq X$ of codimension $e$ with
ideal sheaf $\sI^{}_Y$, the following is our main theorem.

\begin{theorem}
  \label{thm:main}
  Let $L$ be a line bundle on $X$ and let $m$ be a nonnegative integer.  If $Y$
  is defined scheme-theoretically by the nef divisors $D^{}_1, \dotsc, D^{}_r$
  and $L \otimes \sOX\bigl( -(m+1)D_{s_1} - D_{s_2} - \dotsb - D_{s_e} \bigr)$
  is a big, nef line bundle for all subsets $\{s^{}_1, s^{}_2, \dotsc, s^{}_e \}
  \subseteq \{1, \dotsc, r\}$, then we have
  \[
  H^i \bigl( X, \sI_{Y}^{\,\, m+1} \otimes \KX \otimes L \bigr) = 0 \, , \quad
  \text{for all $i > 0$.}
  \]
\end{theorem}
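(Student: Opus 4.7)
The plan is to transfer the cohomology calculation to the blowup $\pi\colon X' \to X$ of $X$ along $Y$, with exceptional divisor $E$, and then apply a Kawamata--Viehweg-type vanishing on $X'$. Since $Y$ is nonsingular, one has $\pi_*\sOXp\bigl(-(m+1)E\bigr) = \sI_Y^{\,m+1}$ and $R^j\pi_*\sOXp\bigl(-(m+1)E\bigr) = 0$ for $j > 0$; combined with the canonical bundle formula $\KXp = \pi^*\KX + (e-1)E$, the projection formula and the Leray spectral sequence identify the target cohomology with
\[
H^i\bigl(X',\, \KXp \otimes \pi^* L \otimes \sOXp(-(m+e)E)\bigr).
\]
It therefore suffices to exhibit the class $N := \pi^* L - (m+e) E$ as big and nef on $X'$, or to realize it as the positive part of a suitable klt pair $(X', \Delta)$.

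The hypothesis is tailored for a decomposition argument: for any ordered $e$-tuple $(s_1, \dotsc, s_e)$ of distinct indices one has
\[
N = \pi^*\bigl(L - (m+1) D_{s_1} - D_{s_2} - \dotsb - D_{s_e}\bigr) \;+\; (m+1)(\pi^* D_{s_1} - E) \;+\; \sum_{j=2}^{e} (\pi^* D_{s_j} - E).
\]
The first summand is the pullback by a birational morphism of a big and nef line bundle, hence itself big and nef on $X'$. Each class $\pi^* D_i - E$ is effective: the defining section $s_i \in H^0(X, \sOX(D_i))$ pulls back to vanish along $E$, and dividing out the canonical section of $\sOXp(E)$ yields a section $t_i$ of $\pi^* \sOX(D_i) \otimes \sOXp(-E)$; moreover the collection $(t_1, \dotsc, t_r)$ has no common zero on $X'$, because $\pi^{-1}\sI_Y \cdot \sOXp = \sOXp(-E)$ by the universal property of the blowup.

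The main obstacle I foresee is that the individual classes $\pi^* D_i - E$ are not generally nef---blowing up the intersection of the two ruling fibers in $\PP^1 \times \PP^1$ produces $(-1)$-curves, and in higher codimension one can even locate horizontal curves $C \subset E$ on which $N \cdot C < 0$. Converting the effective-plus-big decomposition above into a genuine positivity assertion will therefore require additional input: perhaps a further log resolution along the $D_i$ and their strict transforms, or a logarithmic Kawamata--Viehweg applied to a klt pair $(X', \Delta)$ whose boundary encodes the collective base-point-freeness of the $t_i$ across all choices of $e$-tuple, so that the freedom to reassign the distinguished coefficient $m+1$ to the worst contributor for each test curve is built into the divisor itself. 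Once a suitable $\Delta$ is in place and the augmented class pairs nonnegatively with every extremal curve, Kawamata--Viehweg delivers the claimed vanishing.
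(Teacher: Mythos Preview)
Your setup matches the paper exactly: pass to the blowup, identify the target cohomology with $H^i\bigl(X', \KXp \otimes B\bigr)$ for $B = \pi^*L - (m+e)E$, and observe the decomposition $B = \pi^*\bigl(L - (m+1)D_{s_1} - \sum_{j\geq 2} D_{s_j}\bigr) + (m+1)F_{s_1} + \sum_{j\geq 2} F_{s_j}$ with $F_i := \pi^*D_i - E$ effective. You also correctly diagnose the obstacle: $B$ is big but not nef, and the $F_i$ are not nef, so Kawamata--Viehweg does not apply directly. However, the final paragraph is not a proof but a wish list---you speculate about a klt pair $(X',\Delta)$ without constructing one, and the phrase ``the freedom to reassign the distinguished coefficient $m+1$ \dots\ is built into the divisor itself'' has no content until you say what $\Delta$ is.

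The paper resolves this via the \emph{asymptotic} multiplier ideal $\sJ(\norm{B})$ rather than a single boundary $\Delta$. The key lemma is: if $B$ is big, $B + (m+e)E$ is big and nef, and the cosupport of $\sJ(\norm{B})$ is disjoint from $E$, then $H^i(X',\KXp\otimes B)=0$. The first two hypotheses you already have. For the third, fix $z\in E$; a local computation on the blowup shows one can reorder an $e$-tuple so that $z\notin F_{s_1}$ and $F_{s_2}+\dotsb+F_{s_e}$ has normal crossings at $z$. A Wilson-type argument then produces, for $k\gg 0$, an effective $\QQ$-divisor $G = \tfrac{k-1}{k}\bigl((m+1)F_{s_1}+\sum_{j\geq 2}F_{s_j}\bigr)+\tfrac{1}{k}F'$ with $kG\in |B^{\otimes k}(-\widetilde D)|$ and $F'$ missing $z$; since $G$ has snc support with coefficients $<1$ at $z$, $\sJ(G)$ is trivial there, and $\sJ(G)\subseteq \sJ(\norm{B})$ forces $\sJ(\norm{B})$ to be trivial at $z$ as well. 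This is precisely the ``freedom to reassign'' you intuited, implemented pointwise rather than globally.
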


Under the assumption that each $D_{\! j}$ belongs to the linear system for some
power of a single globally generated line bundle, we recover Theorem~7 in
\cite{BEL}.  In particular, this additional hypothesis induces an ordering on
the subsets $\{ s^{}_1, \dotsc s^{}_e \}$ and it is enough to consider the
unique maximal subset.  Unlike both Theorem~7 in \cite{BEL} or Theorem~1.1 in
\cite{FE}, we do not assume that there exists such an ordering.  This
distinction is surprisingly significant; we don't have to follow a single ray in
the nef cone of $X$.  At a superficial level, our statement must involve an
extra universal quantifier because we cannot select a maximal subset.  By
capitalizing on local information, Theorem~\ref{thm:semilocal} allows one to
avoid considering all subsets.  More substantially, the arguments in \cite{BEL},
\cite{Ber} and \cite{FE} all implicitly exploit an ordering, so we need a
different proof.

Although a scheme $Y$ should be studied from within its `natural' ambient space
$X$, the advantages of our main theorem are most pronounced when the nef cone of
$X$ has dimension at least two.  For example, it is tailor-made to handle the
ambient toric varieties that appear in mirror symmetry~\cite{CK}, embeddings of
Mori dream spaces \cite{HK}, and tropical compactifications \cite{Tev}.  As
Remark~\ref{rem:KawamataViehweg} explains, the main theorem is a
higher-codimensional version of the Kawamata-Viehweg vanishing theorem.
Proposition~\ref{pro:sharp} and Example~\ref{exa:product} demonstrate that it is
optimal from this perspective.

Our primary application concerns the multigraded Castelnuovo-Mumford regularity
of $Y$.  Let $L$ be a line bundle on $X$ and fix free divisors $P^{}_1, \dotsc,
P^{}_\ell$ on $X$ such that some positive linear combination is very ample.
Following \cite{MS} or \cite{HSS}, a sheaf $\sF$ is \define{$L\,$-regular} with
respect to $P^{}_1, \dotsc, P^{}_\ell$ if $H^i \bigl( X, \sF \otimes L \otimes
\sOX(- u^{}_1 \, P^{}_1 - \dotsb - u^{}_\ell \, P^{}_\ell) \bigr) = 0$ for all
$i > 0$ and all $(u^{}_1, \dotsc, u^{}_\ell) \in \NN^\ell$ satisfying $u^{}_1 +
\dotsb + u^{}_\ell = i$.  This is the standard form of Castelnuovo-Mumford
regularity when $\ell = 1$ (e.g., see Section~1.8 of \cite{PAGI}).  Since the
globally generated line bundles $\sOX(P^{}_1), \dotsc, \sOX(P^{}_\ell)$ do not
have a natural total order, one cannot choose a smallest regularity.  Bounding
the regularity of $\sF$ is, therefore, equivalent to describing a subset of
$\bigl\{ L \in \Pic(X) : \text{$\sF$ is $L$-regular} \bigr\}$.  By taking $m =
0$ in Theorem~\ref{thm:main}, we obtain the following bounds on the multigraded
regularity of $\sI^{}_{Y}$.

\begin{corollary}
  \label{cor:reg}
  Let $N$ be a nef line bundle on $X$ and let $Y \subseteq X$ be defined
  scheme-theoretically by nef divisors $D_1, \dotsc, D_r$.  Assume that each
  $D_j$ can be expressed as a positive linear combination of the $P^{}_1, \dotsc
  P^{}_\ell$.  If $L \otimes \sOX(- D_{s_1} - \dotsb - D_{s_e} - u^{}_1 \,
  P^{}_1 - \dotsb - u^{}_\ell \, P^{}_\ell)$ is a big, nef line bundle for all
  subsets $\{s^{}_1, \dotsc, s^{}_e \} \subseteq \{1, \dotsc, r\}$ and all
  $(u_1, \dotsc, u_\ell) \in \NN^\ell$ satisfying $u^{}_1 + \dotsb + u^{}_\ell =
  \dim(Y) + 1$, then the ideal sheaf $\sI^{}_Y$ is $K_X \otimes L \otimes
  N$-regular with respect to $P^{}_1, \dotsc, P^{}_\ell$.
\end{corollary}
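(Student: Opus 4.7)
The plan is to apply Theorem~\ref{thm:main} (with $m = 0$) to an appropriate twist of the line bundle for each multi-index required by the definition of multigraded regularity. Unpacking the definition, one must verify
\[
H^i\bigl(X, \sI_Y \otimes \KX \otimes L \otimes N \otimes \sOX(-u_1 P_1 - \dotsb - u_\ell P_\ell)\bigr) = 0
\]
for every $i > 0$ and every $(u_1, \dotsc, u_\ell) \in \NN^\ell$ satisfying $u_1 + \dotsb + u_\ell = i$. Fixing such a tuple, set $L' := L \otimes N \otimes \sOX(-u_1 P_1 - \dotsb - u_\ell P_\ell)$; applying Theorem~\ref{thm:main} to $L'$ requires that $L' \otimes \sOX(-D_{s_1} - \dotsb - D_{s_e})$ be big and nef for every subset $\{s_1, \dotsc, s_e\} \subseteq \{1, \dotsc, r\}$, and because $N$ is nef this reduces to the bigness and nefness of $L \otimes \sOX(-\sum_j u_j P_j - \sum_k D_{s_k})$.

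In the main case $u_1 + \dotsb + u_\ell \leq \dim(Y) + 1$, I would extend $(u_j)$ to some $(v_j)$ with $v_j \geq u_j$ and $v_1 + \dotsb + v_\ell = \dim(Y) + 1$. The hypothesis then gives that $L \otimes \sOX(-\sum_j v_j P_j - \sum_k D_{s_k})$ is big and nef, and tensoring with $\sOX\bigl(\sum_j (v_j - u_j) P_j\bigr)$, which is nef since each $P_j$ is free, yields the required big and nef line bundle. Theorem~\ref{thm:main} then supplies vanishing in every positive degree, in particular at $i = \sum_j u_j$.

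The complementary case $u_1 + \dotsb + u_\ell > \dim(Y) + 1$ is the main obstacle, since Theorem~\ref{thm:main} does not apply directly. Here I would use the ideal sheaf sequence $0 \to \sI_Y \to \sOX \to \sO_Y \to 0$: since $H^j(Y, \sF) = 0$ for $j > \dim(Y)$ and any coherent $\sF$ on $Y$, the long exact sequence identifies $H^i(X, \sI_Y \otimes M) \cong H^i(X, M)$ whenever $i \geq \dim(Y) + 2$, reducing the task to showing $H^i(X, \KX \otimes L') = 0$ for $i = \sum_j u_j$. My plan is a Mumford-style induction on $u_1 + \dotsb + u_\ell - \dim(Y) - 1 \geq 1$: choose $j_0$ with $u_{j_0} \geq 1$, take a general smooth divisor $H \in |P_{j_0}|$, and combine the long exact sequence of $0 \to \sOX(-P_{j_0}) \to \sOX \to \sO_H \to 0$ with adjunction $(K_H = (\KX \otimes \sOX(P_{j_0}))|_H)$ and the induction hypothesis applied to the restricted data on $H$, with $Y \cap H$ cut out by the restrictions of the $D_j$.
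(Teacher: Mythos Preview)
Your treatment of the range $i \leq \dim(Y)+1$ is correct and matches the paper: extend $(u_j)$ to $(v_j)$ with $\sum v_j = \dim(Y)+1$, invoke the hypothesis, and apply Theorem~\ref{thm:main} with $m=0$ to the twisted line bundle. The reduction in the complementary range via the ideal-sheaf sequence is also the paper's move: for $i \geq \dim(Y)+2$ one has $H^{i-1}(Y,-)=0$, so it suffices to kill $H^i\bigl(X, \KX \otimes L \otimes N \otimes \sOX(-\textstyle\sum u_j P_j)\bigr)$.

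The gap is in how you propose to obtain this last vanishing. Your Mumford-style induction never invokes the hypothesis that each $D_j$ is a \emph{positive} linear combination of the $P_k$, and that hypothesis is precisely what the paper uses here (see also Remark~\ref{rem:reghyp}, which makes clear that some such assumption is needed). Concretely, the paper observes that because each $D_j$ absorbs $P$-weight, one can rewrite $\sum u_j P_j = D_{s_1}+\dotsb+D_{s_e} + \sum u_j' P_j$ with $u_j' \geq 0$ and $\sum u_j' \leq \dim(Y)$; then $L \otimes \sOX(-\sum u_j P_j)$ is big and nef by the hypothesis, and Kawamata--Viehweg finishes immediately. Your hyperplane induction, by contrast, runs into two problems. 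First, passing to a general $H \in |P_{j_0}|$ does not reduce the gap you are inducting on: after adjunction the restricted sheaf on $H$ still carries $P$-weight $\sum u_j$ while $\dim(Y\cap H)+1 = \dim(Y)$, so you are no closer to the ``main case'' on $H$ than you were on $X$. Second, even if you try to feed the restricted data back into the corollary on $H$, the big-and-nef hypothesis need not restrict: nefness restricts, but bigness of $L \otimes \sOX(-D_{s_1}-\dotsb-D_{s_e}-\sum v_j P_j)$ on $X$ does not force bigness of its restriction to $H$. Replace the induction by the direct Kawamata--Viehweg argument using the positive-combination hypothesis.
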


\noindent
As an immediate consequence, we see that the multigraded regularity of $Y$ grows
at most linearly in terms of its defining equations (see
Remark~\ref{rem:linear}).  In other words, even when embedded into an ambient
space other than $\PP^d$, the `algebraic complexity' of a nonsingular variety is
essentially as small as possible.  In the special case that $X = \PP^d$ and
$\sOX(P^{}_1) = \sOX(1)$, Example~\ref{exa:reg} shows that
Corollary~\ref{cor:reg} becomes the sharp bound in Corollary~4~(i) in
\cite{BEL}.  Since Theorem~4.7 in \cite{CU} and Corollary~1.2 in \cite{FE} both
generalize Corollary~4 in \cite{BEL} by allowing $X$ and $Y$ to have mild
singularities, one certainly hopes that Corollary~\ref{cor:reg} can be
strengthened in a similar way, but our techniques do not currently accommodate
singularities.

Our second application involves adjoint line bundles, that is line bundles of
the form $\KX \otimes P$ where $P$ is a suitably positive line bundle.  Set $d
:= \dim(X)$ and let $A^{}_1, \dotsc, A^{}_d$ be very ample line bundles on $X$.
Applying Theorem~\ref{thm:semilocal} to the diagonal $\Delta(X) \subset X \times
X$ yields the following criterion. 

\begin{corollary}
  \label{cor:veryample}
  If $L^{}_1$ and $L^{}_2$ are line bundles on $X$ such that $L_j \otimes
  A^{-1}_1 \otimes \dotsb \otimes A^{-1}_d$ is big and nef for all $1 \leq j
  \leq 2$, then the multiplication map
  \[
  H^0 \bigl(X, \KX \otimes L^{}_1 \bigr) \otimes H^0 \bigl( X, \KX \otimes
  L^{}_2 \bigr) \longrightarrow H^0 \bigl( X, K^{\otimes 2}_X \otimes L^{}_1
  \otimes L^{}_2 \bigr)
  \]
  is surjective.  In particular, the adjoint bundle $\KX \otimes L^{}_1$ defines
  a projectively normal embedding of $X$ provided it is very ample.
\end{corollary}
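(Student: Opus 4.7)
The strategy is to reformulate the surjectivity as a cohomology vanishing on $Z := X \times X$ and then apply Theorem~\ref{thm:semilocal} to the diagonal $\Delta \subset Z$. Let $p^{}_1, p^{}_2 \colon Z \to X$ be the projections, set $\mathcal{M} := p_1^*(\KX \otimes L^{}_1) \otimes p_2^*(\KX \otimes L^{}_2)$, and write $L := p_1^* L^{}_1 \otimes p_2^* L^{}_2$. By the K\"unneth formula, $H^0(Z, \mathcal{M}) \cong H^0(X, \KX \otimes L^{}_1) \otimes H^0(X, \KX \otimes L^{}_2)$, and the multiplication map is then the restriction $H^0(Z, \mathcal{M}) \to H^0(\Delta, \mathcal{M}|^{}_\Delta) = H^0(X, K_X^{\otimes 2} \otimes L^{}_1 \otimes L^{}_2)$. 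Since $K_Z = p_1^* \KX \otimes p_2^* \KX$ gives $\mathcal{M} = K_Z \otimes L$, the surjectivity reduces to
\[
H^1\bigl(Z, \sI^{}_\Delta \otimes K_Z \otimes L\bigr) = 0,
\]
which has exactly the shape of Theorem~\ref{thm:main} with $m = 0$.

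Next, I would exhibit $\Delta$ as a scheme-theoretic intersection of nef divisors on $Z$ built from the $A_i$. Since each $A_i$ is very ample, it provides an embedding $\phi_i \colon X \hookrightarrow \PP(H^0(X, A_i)^\vee)$, and the diagonal of $\PP(H^0(X, A_i)^\vee) \times \PP(H^0(X, A_i)^\vee)$ is cut out scheme-theoretically by the $2 \times 2$ minors, which are global sections of $\sO(1, 1)$. Pulling these back under $\phi_i \times \phi_i$ produces global sections of the ample line bundle $p_1^* A_i \otimes p_2^* A_i$ on $Z$ whose joint zero locus equals the preimage of the ambient diagonal. Because the product embedding $X \hookrightarrow \prod_i \PP(H^0(X, A_i)^\vee)$ is still a closed immersion, intersecting these vanishing loci over $i = 1, \dotsc, d$ recovers $\Delta$ scheme-theoretically. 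This supplies nef (in fact ample) divisors $\{D^{}_{i, \alpha}\}$ that define $\Delta$, each $D^{}_{i,\alpha}$ lying in the class $p_1^* A_i + p_2^* A_i$.

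Now I would apply Theorem~\ref{thm:semilocal}. The codimension of $\Delta$ in $Z$ is $d = \dim(X)$, and the preferred $d$-element subset selects one divisor $D^{}_i \in |p_1^* A_i + p_2^* A_i|$ for each $i$. For this subset,
\[
L \otimes \sO^{}_Z(-D^{}_1 - \dotsb - D^{}_d) \cong p_1^*\bigl(L^{}_1 \otimes A_1^{-1} \otimes \dotsb \otimes A_d^{-1}\bigr) \otimes p_2^*\bigl(L^{}_2 \otimes A_1^{-1} \otimes \dotsb \otimes A_d^{-1}\bigr),
\]
which is big and nef because each factor is big and nef on $X$ by hypothesis, and the external tensor product of big, nef line bundles is big and nef. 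The main obstacle is that the plain version of the main theorem would force one to check every size-$d$ subset of the $D^{}_{i, \alpha}$; subsets that select several divisors from the same class $p_1^* A_i + p_2^* A_i$ generally fail to give a big, nef twist. The whole argument therefore rests on the semi-local refinement allowing one to restrict attention to the preferred subset via local information at each point of $\Delta$.

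For the projective-normality statement, let $M := \KX \otimes L^{}_1$ be very ample and, for $a \geq 1$, set $L^{(a)} := L^{}_1 \otimes M^{\otimes(a-1)}$ so that $\KX \otimes L^{(a)} = M^{\otimes a}$. Then $L^{(a)} \otimes A_1^{-1} \otimes \dotsb \otimes A_d^{-1}$ is the sum of the big, nef class $L^{}_1 \otimes A_1^{-1} \otimes \dotsb \otimes A_d^{-1}$ and the ample class $M^{\otimes(a-1)}$, hence big and nef. Applying the first part to the pair $(L^{(a)}, L^{(b)})$ yields the surjectivity of $H^0(M^{\otimes a}) \otimes H^0(M^{\otimes b}) \to H^0(M^{\otimes(a+b)})$ for all $a, b \geq 1$, which is equivalent to projective normality of the embedding defined by $M$.
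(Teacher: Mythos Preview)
Your approach is essentially identical to the paper's: reduce to $H^1\bigl(X\times X,\sI_{\Delta}\otimes K_{X\times X}\otimes(L_1\boxtimes L_2)\bigr)=0$, cut out $\Delta$ by divisors in the linear series $|A_k\boxtimes A_k|$, and invoke Theorem~\ref{thm:semilocal} with $\Phi$ consisting of the ``transversal'' subsets that pick one divisor from each class. The one point you leave implicit---and should state---is the verification of the hypothesis of Theorem~\ref{thm:semilocal}: at each $(x,x)\in\Delta$ one can choose $P_k\in|A_k\boxtimes A_k|$ for $1\le k\le d$ so that $\bigoplus_k\sO_{X\times X,(x,x)}(-P_k)\to\sI_{\Delta,(x,x)}$ is surjective; the paper asserts this using smoothness of $\Delta$ and $X\times X$ together with the fact that each very ample $A_k$ separates tangent directions at $x$, so the images of $|A_k\boxtimes A_k|$ in $T^*_xX\cong(\sI_\Delta/\sI_\Delta^2)_{(x,x)}$ already span, allowing a generic transversal choice to give a basis.
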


\noindent
We recover Variant~3.2 in \cite{BEL} when $A^{}_1 = \dotsb = A^{}_d$ and
Example~\ref{exa:K3} highlights the comparative power of
Corollary~\ref{cor:veryample}.  By replacing the very ample line bundles with
globally generated line bundles that define a closed embedding into a product of
projective spaces, Proposition~\ref{pro:adjoint} provides an analogous criteria
for the multiplication map to be surjective.  Example~\ref{exa:comparative}
illustrates the utility of these alternative hypotheses.  Finally, by employing
Theorem~\ref{thm:main} with $m > 0$, Proposition~\ref{pro:Wahl} gives similar
surjectivity statements for the $m$-th Wahl map.

In contrast with the multiplier ideal methods in \cite{Ber} and \cite{FE}, the
key tool for proving Theorem~\ref{thm:main} is an asymptotic multiplier ideal.
Although the basic outline of our proof parallels \cite{Ber}, the asymptotic
variant delivers three pivotal features.  First, there is a slightly stronger
variant of Nadel vanishing involving only a big line bundle (e.g., see
Theorem~11.2.12~(ii) in \cite{PAGI}).  Second, asymptotic multiplier ideals
depend only on the numerical class of a line bundle (see Example~11.3.12 in
\cite{PAGI}) and this provides some leeway in establishing that a given ideal is
trivial.  Third, we can use asymptotic constructions such as a generalization of
Wilson's theorem (see Lemma~\ref{lem:Wilson}).  Our approach, especially
Lemma~\ref{lem:technical} and Theorem~\ref{thm:semilocal}, underscores an
intriguing connection between local properties of the divisors $D_1, \dotsc,
D_r$ and vanishing statements.  As an added benefit, these ideas also produce
the following vanishing theorem for vector bundles.

\begin{proposition}
  \label{pro:Griffiths}
  Let $E$ be a vector bundle of rank $e$ on $X$ that is a quotient of a direct
  sum of line bundles $\sOX(D^{}_1) \oplus \dotsb \oplus \sOX(D^{}_r)
  \longrightarrow E$ where each divisor $D_j$ is nef.  If $m$ is a positive
  integer and the line bundle $L \otimes \sOX\bigl( (m+1) D_{s_1} + D_{s_2} +
  \dotsb + D_{s_e} \bigr)$ is big, nef for all subsets $\{ s^{}_1, \dotsc,
  s^{}_e \} \subseteq \{1, \dotsc, r\}$, then we have $H^i \bigl( X, \KX \otimes
  \det(E) \otimes \Sym^m(E) \otimes L \bigr) = 0$ for all $i > 0$.
\end{proposition}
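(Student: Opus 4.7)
The plan is to adapt the proof of Theorem~\ref{thm:main} after passing to the projective bundle $\pi\colon\PP(E)\to X$. Set $H:=\sO_{\PP(E)}(1)$. Since $m\geq 0$ we have $R^j\pi_*H^{\otimes m}=0$ for $j>0$ and $\pi_*H^{\otimes m}=\Sym^m(E)$; combined with the relative canonical bundle formula $K_{\PP(E)/X}=\pi^*\det(E)\otimes H^{-e}$, the projection formula and the Leray spectral sequence yield
\[
H^i\bigl(X,\KX\otimes\det(E)\otimes\Sym^m(E)\otimes L\bigr)\;\cong\;H^i\bigl(\PP(E),K_{\PP(E)}\otimes H^{m+e}\otimes\pi^*L\bigr),
\]
reducing the problem to a vanishing statement on $\PP(E)$.

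I would then transport the hypotheses to $\PP(E)$.  The surjection $\bigoplus_j\sOX(D_j)\twoheadrightarrow E$ composed with the tautological quotient $\pi^*E\twoheadrightarrow H$ produces sections $s_j\in H^0(\PP(E),M_j)$, where $M_j:=H\otimes\pi^*\sOX(-D_j)$, whose common zero locus $\bigcap_{j=1}^r F_j$ (with $F_j:=\mathrm{div}(s_j)$) is empty. For each ordered tuple $(s_1,\dotsc,s_e)$ there is a linear equivalence
\[
H^{m+e}\otimes\pi^*L\;\sim\;(m+1)F_{s_1}+F_{s_2}+\dotsb+F_{s_e}+\pi^*\bigl(L\otimes\sOX((m+1)D_{s_1}+D_{s_2}+\dotsb+D_{s_e})\bigr),
\]
in which the hypothesis furnishes the pullback of a big and nef line bundle on $X$, while the divisors $F_{s_j}$ contribute the missing positivity along the fibers of $\pi$. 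The remainder now parallels the asymptotic multiplier ideal argument underlying Theorem~\ref{thm:main}, with $F_1,\dotsc,F_r$ on $\PP(E)$ playing the role of $D_1,\dotsc,D_r$ on $X$: form a suitable asymptotic multiplier ideal, use the emptiness of $\bigcap_j F_j$ to conclude that this ideal equals $\sO_{\PP(E)}$, and invoke the big-line-bundle variant of Nadel vanishing (Theorem~11.2.12(ii) of~\cite{PAGI}). Letting the ordered tuple $(s_1,\dotsc,s_e)$ vary supplies enough directions, via Lemma~\ref{lem:Wilson}, to verify the required bigness.

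The main obstacle is the simultaneous satisfaction of two competing requirements: the multiplier ideal must stay trivial so that Nadel vanishing recovers the full sheaf, yet enough effective contribution must be absorbed into it for the residual line bundle to be big and nef. Since the coefficient $m+1\geq 2$ in front of $F_{s_1}$ precludes a naive rescaling of the decomposition above, this trade-off is workable only through the asymptotic nature of the multiplier ideal and the semilocal flexibility captured by Theorem~\ref{thm:semilocal}, exactly as in the proof of Theorem~\ref{thm:main}. Balancing these contributions is the crux of the argument.
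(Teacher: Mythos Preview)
Your proposal is correct and follows essentially the same route as the paper: pass to $\PP(E)$, identify the cohomology via the projection formula and the relative canonical bundle, introduce the effective divisors $F_j\in|H\otimes\pi^*\sOX(-D_j)|$, and then run the asymptotic multiplier ideal argument (Lemma~\ref{lem:Wilson} plus Theorem~11.2.12(ii) in \cite{PAGII}) to show $\sJ(\norm{B})=\sO_{\PP(E)}$. The one point worth making explicit is that ``emptiness of $\bigcap_j F_j$'' alone is not what makes the multiplier ideal trivial---you need the local normal-crossings statement analogous to Lemma~\ref{lem:technical}: for each $z\in\PP(E)$ there is a subset $\{s_1,\dotsc,s_e\}$ with $z\notin F_{s_1}$ and $F_{s_2}+\dotsb+F_{s_e}$ having normal crossings at $z$, which follows from working in a local trivialization of $\pi$; with this in hand, the proof of Theorem~\ref{thm:main} actually simplifies, since one bypasses Lemma~\ref{lem:vanishing} entirely and deduces triviality of $\sJ(\norm{B})$ directly.
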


\noindent 
Just as with the main theorem, we recover Proposition~1.12 in \cite{BEL} under
the assumption that each $D_{\! j}$ belongs to the linear system for some power
of a single globally generated line bundle.  Assuming $E$ is a quotient of a
direct sum of nef line bundles, Proposition~\ref{pro:Griffiths} also improves on
Example~7.3.3 in \cite{PAGII}.

\section{Notation and conventions}

\begin{itemize}
\item We write $\NN$ for the set of nonnegative integers.
\item We work throughout over the complex numbers $\CC$.
\item Let $X$ be a nonsingular projective variety of dimension $d$ and let $K_X$
  be its canonical line bundle.
\item A subvariety $Y \subseteq X$ is \define{defined scheme-theoretically} by
  the divisors $D_1, \dotsc, D_r$ on $X$ if we have $Y = D_1 \cap \dotsb \cap
  D_r$ and the map $\bigoplus_{j=1}^{r} \sOX(-D_j) \longrightarrow \sI^{}_Y$
  determined by the $D_j$ is surjective.
\item On a product $Z_1 \times \dotsb \times Z_\ell$, set $E_1 \boxtimes \dotsb
  \boxtimes E_\ell := \operatorname{p}_1^{*}(E_1) \otimes \dotsb \otimes
  \operatorname{p}_\ell^{*}(E_\ell)$ where $E_j$ is a vector bundle on $j$-th
  factor $Z_j$ and $\operatorname{p}_j \colon Z_1 \times \dotsb \times Z_\ell
  \longrightarrow Z_j$ is the projection.
\end{itemize}

\section{The main vanishing theorem}
\label{sec:one}

\noindent
In this section, we prove Theorem~\ref{thm:semilocal}; Theorem~\ref{thm:main}
follows as a special case.  We begin by presenting three parts of the argument
as independent lemmas.  After completing the proof, we show that the main
theorem is optimal in some situations.

The first lemma is inspired by Proposition~2.1 in \cite{Ber}.  Following
Definition~11.1.2 in \cite{PAGII}, the asymptotic multiplier ideal
$\sJ(\norm{B})$ associated to the line bundle $B$ on $X$ is the unique maximal
member among the family of multiplier ideals $\bigl\{ \sJ\bigl( \tfrac{1}{k}
\cdot |B^{\otimes k}| \bigr) : k \in \NN \bigr\}$.

\begin{lemma}
  \label{lem:vanishing}
  Let $B$ be a big line bundle and let $E$ be an effective divisor on $X$ such
  that the line bundle $B \otimes \sOX(E)$ is big and nef.  If the closed
  subschemes in $X$ defined by the ideal sheaves $\sJ(\norm{B})$ and $\sOX(-E)$
  are disjoint, then we have $H^i( X, \KX \otimes B) = 0$ for all $i > 0$.
\end{lemma}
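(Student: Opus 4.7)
The plan is to combine the asymptotic Nadel vanishing available for the big line bundle $B$ with Kawamata-Viehweg vanishing for the big and nef line bundle $B \otimes \sOX(E)$, linking them together via a Mayer-Vietoris short exact sequence built from the two ideal sheaves $\sJ(\norm{B})$ and $\sOX(-E)$.

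My first step is to translate the geometric disjointness hypothesis into a purely algebraic one: since the two closed subschemes cut out by $\sJ(\norm{B})$ and $\sOX(-E)$ do not meet, at every point $x \in X$ at least one of these ideals is the whole local ring $\sOX_x$, so their sum satisfies $\sJ(\norm{B}) + \sOX(-E) = \sOX$. A standard coprimality argument then identifies the intersection $\sJ(\norm{B}) \cap \sOX(-E)$ with the product $\sJ(\norm{B}) \cdot \sOX(-E)$, and because $\sOX(-E)$ is invertible this intersection is canonically isomorphic to $\sJ(\norm{B}) \otimes \sOX(-E)$. Packaging these observations into the Mayer-Vietoris presentation of $\sOX$ yields the short exact sequence
\[
0 \longrightarrow \sJ(\norm{B}) \otimes \sOX(-E) \longrightarrow \sJ(\norm{B}) \oplus \sOX(-E) \longrightarrow \sOX \longrightarrow 0 \, .
\]

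Next, I would twist this sequence by the line bundle $\KX \otimes B \otimes \sOX(E)$, which is precisely the modification needed so that the outer terms land in the scope of the two classical vanishing theorems. The leftmost term becomes $\sJ(\norm{B}) \otimes \KX \otimes B$, whose higher cohomology vanishes by the asymptotic Nadel vanishing for big line bundles (Theorem~11.2.12(ii) in \cite{PAGII}) applied to $B$, while the rightmost term becomes $\KX \otimes B \otimes \sOX(E)$, whose higher cohomology vanishes by Kawamata-Viehweg since $B \otimes \sOX(E)$ is big and nef. Feeding both inputs into the long exact sequence of cohomology forces $H^i$ of the middle term to vanish for every $i > 0$, and because $\KX \otimes B$ sits inside that middle term as a direct summand, the conclusion $H^i(X, \KX \otimes B) = 0$ follows at once.

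The real obstacle is the first step: one must carefully upgrade the disjointness of the two vanishing loci to genuine coprimality of the ideal sheaves, so that the Mayer-Vietoris sequence is exact with the correct kernel identification $\sJ(\norm{B}) \cap \sOX(-E) \cong \sJ(\norm{B}) \otimes \sOX(-E)$. Once that bookkeeping is in place, the twist and the cohomological diagram chase are essentially mechanical.
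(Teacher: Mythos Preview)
Your argument is correct, and it is a genuinely different (and in fact more economical) route than the one the paper takes. Both proofs rest on the same two inputs --- asymptotic Nadel vanishing for the big line bundle $B$ and Kawamata--Viehweg for the big and nef line bundle $B \otimes \sOX(E)$ --- and both exploit the disjointness of the cosupports to pass between the relevant ideal sheaves. The difference lies in how those inputs are assembled. The paper works with the quotients $\sOX/\sJ(\norm{B})$ and $\sOX/\bigl(\sJ(\norm{B})\cdot\sOX(-E)\bigr)$: it first uses a $\sTor$ argument to identify $\sJ(\norm{B})\otimes\sOX(-E)$ with the product ideal, then tensors the sequence $0 \to \sJ(\norm{B})\cdot\sOX(-E) \to \sOX \to \sOX/\bigl(\sJ(\norm{B})\cdot\sOX(-E)\bigr) \to 0$ by $\KX\otimes B\otimes\sOX(E)$ to kill the cohomology of the quotient, splits that quotient as a direct sum via disjointness to isolate $\sOX/\sJ(\norm{B})$, and finally feeds this into the sequence $0 \to \KX\otimes B\otimes\sJ(\norm{B}) \to \KX\otimes B \to \KX\otimes B\otimes\sOX(E)\otimes\bigl(\sOX/\sJ(\norm{B})\bigr) \to 0$. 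Your Mayer--Vietoris sequence short-circuits this chain: after the same twist, $\KX\otimes B$ appears directly as a summand of the middle term, so one long exact sequence suffices. The paper's approach makes the role of the quotient sheaves more visible, but yours reaches the conclusion in fewer steps.
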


\begin{proof}
  For notational brevity, set $\sJ := \sJ(\norm{B})$ and $\sI := \sOX(-E)$.
  Tensoring the short exact sequence of the quotient $\sOX/\sJ$ with the ideal
  sheaf $\sI$ yields the exact sequence
  \begin{equation}
    \label{eq:fourterm}
    0 \longrightarrow \sTor^{}_1 \!\!\left( \frac{\sOX}{\sJ}, \sI
    \right) \longrightarrow \sJ \otimes \sI \longrightarrow \sI \longrightarrow
    \frac{\sI \!\!}{\sJ \!\cdot \sI} \longrightarrow 0 \, .
  \end{equation}
  Since the subschemes defined by $\sJ$ and $\sI$ are disjoint, the localization
  of $\sTor$ 
  establishes that $0 = \sTor^{}_2 \!\bigl( \sOX / \! \sJ, \sOX / \sI \, \bigr)
  = \sTor^{}_1 \!\bigl( \sOX /\! \sJ, \sI \, \bigr)$.  Hence, we have a short
  exact sequence in \eqref{eq:fourterm}, and $\sJ \otimes \sI = \sJ \!\cdot \sI
  \,$ which supplies a second short exact sequence
  \begin{equation}
    \label{eq:ses}
    0 \longrightarrow \sJ \otimes \sI \longrightarrow
    \sOX \longrightarrow \frac{\sOX \!\!}{\sJ \!\cdot \sI} \longrightarrow 0
    \, .
  \end{equation}
  Tensoring this second sequence with $A := \KX \otimes B \otimes \sOX(E)$ and
  taking cohomology produces 
  \[
  \dotsb \longrightarrow H^i\bigl( X, A \bigr) \longrightarrow H^i \!\left( X, A
    \otimes \frac{\sOX \!\!}{\sJ \!\cdot \sI} \right) \longrightarrow H^{i+1}
  \bigl( X, \KX \otimes B \otimes \sJ \bigr) \longrightarrow \dotsb \, .
  \]
  The line bundle $B \otimes \sOX(E)$ is big and nef, so the Kawamata-Viehweg
  vanishing theorem (e.g., see Theorem~4.3.1 in \cite{PAGI}) implies that $H^i
  \bigl( X, A \bigr) = 0$ for all $i > 0$.  Similarly, the line bundle $B$ is
  big, so Theorem~11.2.12~(ii) in \cite{PAGII} asserts that $H^{i+1} \bigl( X,
  \KX \otimes B \otimes \sJ \bigr) = 0$ for all $i \geq 0$.  It follows that
  $H^i \bigl( X, A \otimes (\sOX / \! \sJ \!\cdot \sI \,) \!\bigr) = 0$ for all
  $i > 0$.  Since $\sJ$ and $\sI$ have disjoint cosupport, we also have 
  $(\sOX /\! \sJ) \oplus (\sOX /\! \sI \,) \cong \sOX / \! \sJ \!\cdot \sI$,
  so $H^i \bigl( X, A \otimes (\sOX / \!  \sJ \,) \! \bigr) = 0$ for all $i >
  0$.  Moreover, the canonical inclusion of \eqref{eq:fourterm} into
  \eqref{eq:ses} proves that $\sI / \!  \sJ \!\cdot \sI \cong \sOX /\! \sJ$.
  Thus, tensoring the exact sequence in \eqref{eq:fourterm} with $A$ gives the
  short exact sequence
  \[
  0\longrightarrow \KX \otimes B \otimes \sJ \longrightarrow \KX \otimes B
  \longrightarrow A \otimes \frac{\sOX}{\sJ} \longrightarrow 0 \, . 
  \]
  Having already established that the higher cohomology groups of the left and
  right terms in this sequence vanish, we conclude that the higher cohomology
  groups of $\KX \otimes B$ also vanish.
\end{proof}

Our second lemma is a minor variant of Wilson's theorem (see Theorem~2.3.9 in
\cite{PAGI}).

\begin{lemma}
  \label{lem:Wilson}
  Let $B$ be a big, nef line bundle on $X$.  Given a coherent sheaf $\sF$ on
  $X$, there exists a positive integer $k_0$ and an effective divisor $E$ such
  that $\sF \otimes B^{\otimes k} \otimes \sOX(-E)$ is globally generated for
  all $k \geq k_0$.
\end{lemma}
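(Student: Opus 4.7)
The plan is to combine Kodaira's lemma with a uniform global generation statement derived from Fujita's vanishing theorem. Since $B$ is big, Kodaira's lemma produces a positive integer $p$ and an effective divisor $E_0$ such that the line bundle $A := B^{\otimes p} \otimes \sOX(-E_0)$ is ample; equivalently, $B^{\otimes p} \cong A \otimes \sOX(E_0)$.

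The key step is to establish the following uniform global generation assertion: there is an integer $n_0 \geq 1$ such that $\sF \otimes A^{\otimes n} \otimes N$ is globally generated for every $n \geq n_0$ and every nef line bundle $N$ on $X$. To prove it, I would fix a very ample divisor $H$ on $X$ and, for each $0 \leq i \leq d$, apply Fujita's vanishing theorem to the coherent sheaf $\sF \otimes \sOX(-iH)$ and the ample line bundle $A$. This supplies an integer $n_i$ such that
\[
H^j\bigl(X,\, \sF \otimes \sOX(-iH) \otimes A^{\otimes n} \otimes N\bigr) = 0
\]
for all $j > 0$, all $n \geq n_i$, and all nef $N$. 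Taking $n_0 := \max_i n_i$ and specializing to $j = i$ shows that $\sF \otimes A^{\otimes n} \otimes N$ is $0$-regular with respect to $H$ whenever $n \geq n_0$, hence globally generated by the standard Castelnuovo-Mumford theorem (Theorem~1.8.5 in \cite{PAGI}).

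With this in hand, set $E := n_0 E_0$ and $k_0 := n_0 p$. For any $k \geq k_0$, writing $k = n_0 p + j$ with $j \geq 0$ yields
\[
\sF \otimes B^{\otimes k} \otimes \sOX(-E) \cong \sF \otimes A^{\otimes n_0} \otimes B^{\otimes j},
\]
and since $B$ is nef the bundle $B^{\otimes j}$ is nef, so the right-hand side is globally generated by the previous step. The principal obstacle is the uniform global generation assertion: bridging Fujita's cohomological vanishing (which is uniform in nef twists) to global generation that is also uniform in nef twists through Castelnuovo-Mumford regularity. Once that is in place, the remaining work is essentially bookkeeping built on Kodaira's lemma.
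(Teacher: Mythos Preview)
Your proof is correct and uses the same three ingredients as the paper---Fujita vanishing, Kodaira's lemma, and Castelnuovo--Mumford regularity---with only a minor reordering: the paper applies Fujita once to $\sF$ to produce a globally generated ample divisor $\widetilde{D}$, then uses bigness of $B$ to write $B^{\otimes k_0}\cong\sOX\bigl((d{+}1)\widetilde{D}+E\bigr)$ and checks regularity of $\sF\otimes B^{\otimes k}\otimes\sOX(-E)$ with respect to $\widetilde{D}$ directly, whereas you first extract an ample $A$ from $B^{\otimes p}$ via Kodaira and then invoke Fujita separately for each twist $\sF\otimes\sOX(-iH)$. The content is identical; the paper's organization simply avoids the auxiliary very ample $H$ and the $d{+}1$ separate appeals to Fujita.
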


\begin{proof}
  Fujita's vanishing theorem (e.g., see Theorem~1.4.35 in \cite{PAGI}) shows
  that there is a ample divisor $\widetilde{D}$ on $X$ such that $H^i \bigl( X,
  \sF \otimes \sOX(\widetilde{D}) \otimes N \bigr) = 0$ for all $i > 0$ and any
  nef line bundle $N$.  By replacing $\sOX(\widetilde{D})$ with a sufficiently
  large power, we may also assume that $\sOX(\widetilde{D})$ is globally
  generated.  Since $B$ is big, there exists a positive integer $k_0$ and an
  effective divisor $E$ on $X$ such that $B^{\otimes k_0} \cong \sOX\bigl( (d+1)
  \widetilde{D} + E\bigr)$ where $d := \dim(X)$.
  It follows that $H^i\bigl( X, \sF \otimes B^{\otimes k} \otimes \sOX(-E-i
  \widetilde{D}) \bigr) = 0$ for all $i > 0$ and all $k \geq k_0$.  Therefore,
  the sheaf $\sF \otimes B^{\otimes k} \otimes \sOX(-E)$ is $\sOX$-regular with
  respect to $\sOX(\widetilde{D})$ and globally generated (e.g., see
  Theorem~1.8.5 in \cite{PAGI}).
\end{proof}

For the remainder of this section, $m$ is a positive integer and $Y \subseteq X$
is a nonsingular subvariety of codimension $e$ defined scheme-theoretically by
the nef divisors $D_1, \dotsc, D_r$.  Following \cite{BEL} and \cite{Ber},
consider the blow-up $\pi \colon X' := \Bl_Y(X) \longrightarrow X$ of $X$ along
$Y$ and let $E := \pi^{-1}(Y)$ be the exceptional divisor.  Fix $F_j :=
\pi^*(D_j) - E$ for $1 \leq j \leq r$.  Since $Y$ lies in $D_j$, we have
$\mult^{}_Y(D_j) \geq 1$.  If $D_j^{\, \prime}$ denotes the proper transform of
$D_j$ in $X'$, then each $F_j = D_j^{\, \prime} + (\mult^{}_Y(D_j)-1) E$ is an
effective divisor.  Our third lemma constructs normal crossing divisors from
certain subsets of $F_1, \dotsc, F_r$.

\begin{lemma}
  \label{lem:technical}
  If $z \in E$ and $y := \pi(z)$, then there exists a subset $\{ s^{}_1, \dotsc,
  s^{}_e \} \subseteq \{1, \dotsc, r\}$ such that the induced map
  $\bigoplus_{j=1}^{e} \sO^{}_{X,y}(-D_{s_j}) \longrightarrow \sI^{}_{Y,y}$ is
  surjective.  Moreover, by reindexing such a subset $\{ s^{}_1, \dotsc,
  s^{}_e\}$ if necessary, we also have $z \not\in F_{s_1}$ and $F_{s_2} + \dotsb
  + F_{s_e}$ has normal crossings at $z$.
\end{lemma}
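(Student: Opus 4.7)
The plan is to work locally at the point $y$. Since $Y$ is nonsingular of codimension $e$ at $y$, the stalk $\sI^{}_{Y,y}$ is generated by a regular sequence of length $e$, so the vector space $\sI^{}_{Y,y}/\mathfrak{m}^{}_y \sI^{}_{Y,y}$ has dimension $e$ over $k(y)$ and is naturally identified with the conormal fiber $N^*_{Y/X,y}$. Applying Nakayama's lemma to the given surjection $\bigoplus_{j=1}^{r} \sO^{}_{X,y}(-D^{}_j) \to \sI^{}_{Y,y}$, I can extract $e$ indices $\{s^{}_1, \dotsc, s^{}_e\} \subseteq \{1, \dotsc, r\}$ whose local equations $f^{}_{s_1}, \dotsc, f^{}_{s_e}$ already surject onto $\sI^{}_{Y,y}$; this settles the first statement of the lemma.

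The next step is to convert the minimality of this generating set into geometric information about the divisors $F^{}_{s_j}$ on $X'$. Because $f^{}_{s_1}, \dotsc, f^{}_{s_e}$ is minimal, their images $\bar f^{}_{s_1}, \dotsc, \bar f^{}_{s_e}$ form a basis of $N^*_{Y/X,y}$, and the local equations $f^{}_{s_j}$ generate the maximal ideal at the generic point $\eta$ of $Y$ as a regular system of parameters. Hence no $f^{}_{s_j}$ lies in $\mathfrak{m}_\eta^2$, which forces $\mult^{}_Y(D^{}_{s_j}) = 1$ and thus $F^{}_{s_j}$ equals the strict transform $D'_{s_j}$ for each $j$. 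Identifying $\pi^{-1}(y)$ with $\PP(N^{}_{Y/X,y}) \cong \PP^{e-1}$, each $D'_{s_j}$ meets this fiber in the projective hyperplane $\{\bar f^{}_{s_j} = 0\}$. Since $\bar f^{}_{s_1}, \dotsc, \bar f^{}_{s_e}$ form a basis of the dual of $N^{}_{Y/X,y}$, they cannot all vanish at the nonzero direction $z$, so after permuting the indices I may assume $\bar f^{}_{s_1}(z) \neq 0$, which is exactly $z \notin F^{}_{s_1}$.

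To verify the normal crossings condition, I would extend $f^{}_{s_1}, \dotsc, f^{}_{s_e}$ to a regular system of parameters $f^{}_{s_1}, \dotsc, f^{}_{s_e}, y^{}_1, \dotsc, y^{}_{d-e}$ on $X$ near $y$, and pass to the standard affine chart of $X' = \Bl^{}_Y(X)$ pivoted by $f^{}_{s_1}$. On that chart the point $z$ has local coordinates $f^{}_{s_1}$, $u^{}_j := f^{}_{s_j}/f^{}_{s_1}$ for $2 \leq j \leq e$, and $y^{}_1, \dotsc, y^{}_{d-e}$; the exceptional divisor $E$ is cut out by $f^{}_{s_1}$ and, for $j \geq 2$, the strict transform $F^{}_{s_j} = D'_{s_j}$ is cut out by $u^{}_j$. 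Thus $F^{}_{s_2} + \dotsb + F^{}_{s_e}$ is a sum of $e-1$ coordinate hyperplanes at $z$ and hence has normal crossings there.

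The main obstacle I anticipate is the algebra-to-geometry step in the middle: recognizing that a minimal system of local generators of $\sI^{}_{Y,y}$ automatically has generic multiplicity one along $Y$, so that each $F^{}_{s_j}$ genuinely coincides with the strict transform $D'_{s_j}$. Once this identification is in place, the first conclusion is immediate from Nakayama's lemma, and the normal crossings statement reduces to reading off coordinates in the standard blow-up chart.
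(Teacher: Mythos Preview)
Your proof is correct and follows essentially the same route as the paper: extract a minimal generating set for $\sI_{Y,y}$ via Nakayama, then compute $F_{s_j}$ on the standard affine chart of the blow-up determined by these generators to read off both $z \notin F_{s_1}$ and the normal-crossings condition. Your exposition is a bit more conceptual---you make explicit the identification $\pi^{-1}(y)\cong \PP(N_{Y/X,y})$ and the observation $\mult_Y(D_{s_j})=1$, whereas the paper derives everything directly from the coordinate description of the charts $W_j$---but the underlying argument is identical. One small wording issue: the $u_j$ need not all vanish at $z$, so $F_{s_2}+\dotsb+F_{s_e}$ is not literally a sum of coordinate hyperplanes \emph{through} $z$; rather, those $F_{s_j}$ that do pass through $z$ are cut out by distinct members of a regular system of parameters, which is exactly what normal crossings requires.
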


\begin{proof}
  Both $X$ and $Y$ are nonsingular, so $Y$ is a local complete intersection in
  $X$. 
  Since $Y$ is defined scheme-theoretically by $D_1, \dotsc, D_r$, it follows
  that, for every point $y \in Y$, there is a subset $\{ s^{}_1, \dotsc, s^{}_e
  \} \subseteq \{1, \dotsc, r\}$ so that the induced map $\bigoplus_{j=1}^{e}
  \sO^{}_{X,y}(-D_{s_j}) \longrightarrow \sI^{}_{Y,y}$ is surjective.

  For the second part, choose an affine neighbourhood $U\subseteq X$ of $y$ such
  that $R := \sOX(U)$ is a regular ring, the local equation of $D_{s_j}$ is $f_j
  \in R$ for $1 \leq j \leq e$, and the prime ideal $\sI^{}_{Y}(U)\subseteq R$
  is generated by $f_1, \dotsc, f_e$.  In other words, the elements $f_1,
  \dotsc, f_e$ form a regular system of parameters at the generic point of $Y
  \cap U$.  It follows that $\pi^{-1}(U)$ is the union of the open subschemes
  \[
  W_{\!  j} := \Spec \left( \frac{R[t_1, \dotsc, t_{j-1}, t_{j+1}, \dotsc,
      t_e]}{( f_1 - f_j \, t_1, \dotsc, f_{j-1} - f_j \, t_{j-1}, f_{j+1} - f_j
      \, t_{j+1}, \dotsc, f_{e} - f_j \, t_{e})} \right) \quad \text{for $1 \leq
    j \leq e$,}
  \]
  and the ideal $\sO^{}_{W_j}(-E|^{}_{W_j})$ is generated by $f_j$.  Hence, we
  see that $F_{s_j} \cap W_{\! j} = \varnothing$ and, for $k \neq j$, the local
  equation of $F_{s_k} \cap W_{\! j}$ is $\tfrac{f_k}{f_j} = t_k \in
  \sOXp(W_j)$.  By reindexing the subset $\{ s^{}_1, \dotsc, s^{}_e\}$ if
  necessary, we may assume that $z$ lies in $W_1$, so $z \not\in F_{s_1}$.
  Since the quotient $\sOXp(W_j) / (f_j)$ is the polynomial ring $k(y)[t_1,
  \dotsc t_{j-1}, t_{j+1}, \dotsc , t_e]$, we see that the divisor $F_{s_2} +
  \dotsb + F_{s_e}$ has normal crossings at $z$.
\end{proof}

With these three preliminary results, we can now prove our refinement of
Theorem~\ref{thm:main}.

\begin{theorem}
  \label{thm:semilocal}
  Fix a collection $\Phi$ of $e$-element subsets of $\{ 1, \dotsc, r\}$ such
  that, for each point $y \in Y$, there exists $\{s^{}_1, \dotsc, s^{}_e \} \in
  \Phi$ for which the induced map $\bigoplus_{j=1}^{e} \sO^{}_{X,y}(-D_{s_j})
  \longrightarrow \sI^{}_{Y,y}$ is surjective.  If the line bundle $L \otimes
  \sOX\bigl( -(m+1)D_{s_1} - D_{s_2} - \dotsb - D_{s_e} \bigr)$ is a big and nef
  for all subsets $\{s^{}_1, s^{}_2, \dotsc, s^{}_e \} \in \Phi$, then we have
  $H^i \bigl( X, \sI_{Y}^{\,\, m+1} \otimes \KX \otimes L \bigr) = 0$ for all $i
  > 0$.
\end{theorem}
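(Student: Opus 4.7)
The plan is to pass to the blow-up $\pi: X' := \Bl_Y(X) \to X$ along $Y$, with exceptional divisor $E$, and reduce the desired vanishing to an application of Lemma~\ref{lem:vanishing} on $X'$. Since $Y$ is a smooth local complete intersection of codimension $e$ in $X$, the identities $\pi_* \sOXp(-(m+1)E) = \sI_Y^{m+1}$, $R^j \pi_* \sOXp(-(m+1)E) = 0$ for $j > 0$, and $\KXp = \pi^* \KX \otimes \sOXp((e-1)E)$, combined with the Leray spectral sequence and the projection formula, give
\[
H^i \bigl( X, \sI_Y^{m+1} \otimes \KX \otimes L \bigr) \cong H^i \bigl( X', \KXp \otimes B' \bigr),
\]
where $B' := \pi^* L \otimes \sOXp(-(m+e)E)$. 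It therefore suffices to prove the right-hand side vanishes for $i > 0$.

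For each $S = \{s_1, \dotsc, s_e\} \in \Phi$, set $F_j := \pi^* D_j - E$, $B_S := \pi^* \bigl( L \otimes \sOX(-(m+1)D_{s_1} - D_{s_2} - \dotsb - D_{s_e}) \bigr)$, and define the effective divisor $E_S := (m+1)F_{s_1} + F_{s_2} + \dotsb + F_{s_e}$. A direct calculation shows $B_S \otimes \sOXp(E_S) = B'$; since $B_S$ is the pullback of a big, nef line bundle, it is big and nef on $X'$, and so $B'$ is big. Fix any $S^* \in \Phi$ and take $\widetilde{E} := E_{S^*}$, so that $B' \otimes \sOXp(\widetilde{E}) = B_{S^*}$ is big and nef; this arranges one of the hypotheses of Lemma~\ref{lem:vanishing}.

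The remaining verification---where the bulk of the work lies---is that the cosupports of $\sJ(\|B'\|)$ and $\sOXp(-\widetilde{E})$ are disjoint, i.e.\ $\sJ(\|B'\|)_z = \sOXp_z$ for every $z \in \Supp(E_{S^*})$. For the principal case $z \in E \cap \Supp(E_{S^*})$, I would invoke Lemma~\ref{lem:technical} to produce $S'_z \in \Phi$ with $z \notin F_{s'_1}$ and with $F_{s'_2} + \dotsb + F_{s'_e}$ in simple normal crossings at $z$. The strategy is then to construct, for each sufficiently large $k$, an effective divisor $D_k \in |kB'|$ whose multiplicities along the components of $E_{S'_z}$ passing through $z$ are strictly less than $k$. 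Applying Lemma~\ref{lem:Wilson} to the big, nef line bundle $B_{S'_z}$ yields effective representatives of $|kB_{S'_z}|$ with controlled base locus, and by combining these with sections of $|kB'|$ arising from the other decompositions $B' = B_S + E_S$ as $S$ varies over $\Phi$, one can arrange the desired multiplicity bounds at $z$. The simple-normal-crossings hypothesis then permits a direct log resolution calculation showing $\sJ(\tfrac{1}{k} D_k)_z = \sOXp_z$; combined with the numerical invariance of the asymptotic multiplier ideal, this gives $\sJ(\|B'\|)_z = \sOXp_z$. The remaining case $z \in \Supp(E_{S^*}) \setminus E$ is handled analogously but with fewer complications, since $\pi$ is an isomorphism near such $z$.

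Once the cosupport condition is verified, Lemma~\ref{lem:vanishing} delivers $H^i(X', \KXp \otimes B') = 0$ for $i > 0$, completing the proof. The main obstacle is the construction of $D_k$ in the principal case, which is precisely where the argument must diverge from \cite{BEL}, \cite{Ber}, and \cite{FE}: without an ordering on the subsets $\{s_1, \dotsc, s_e\}$ induced by a single ray in the nef cone, we cannot a priori select a maximal $S$, and it is the asymptotic multiplier ideal---through its numerical invariance and its compatibility with Wilson-type constructions---that enables the patching of local choices of $S'_z$ into a single global vanishing statement.
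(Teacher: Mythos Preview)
Your overall architecture---pass to the blow-up, set $B' = \pi^*L \otimes \sOXp(-(m+e)E)$, and apply Lemma~\ref{lem:vanishing}---matches the paper exactly, and your reduction to checking triviality of $\sJ(\|B'\|)$ along the relevant locus is the right idea. However, your choice of the auxiliary effective divisor $\widetilde{E}$ contains a genuine error that derails the argument.

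You claim $B' \otimes \sOXp(\widetilde{E}) = B_{S^*}$ with $\widetilde{E} = E_{S^*}$, but this contradicts your own earlier (correct) identity $B_S \otimes \sOXp(E_S) = B'$: the two together would force $E_{S^*} = 0$. What is true is $B' \otimes \sOXp(-E_{S^*}) = B_{S^*}$, which is the wrong sign for Lemma~\ref{lem:vanishing}. The quantity you actually need to be big and nef is $B' + E_{S^*} = \pi^*\bigl(L + (m+1)D_{s_1^*} + \dotsb + D_{s_e^*}\bigr) - 2(m+e)E$, and there is no evident reason this should be nef. The paper sidesteps this entirely by taking the effective divisor to be $(m+e)E$ itself: then $B' \otimes \sOXp\bigl((m+e)E\bigr) = \pi^*L$, which is visibly big and nef as the pullback of a big nef bundle tensored with a nef bundle.

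This simpler choice also eliminates your ``remaining case $z \in \Supp(E_{S^*}) \setminus E$,'' which you dismiss as routine but which is not: at such a point $\pi$ is an isomorphism, but $z$ lies on a proper transform $D'_{s_j^*}$, and nothing in your setup guarantees you can find a decomposition $B' = B_S + E_S$ with $z \notin \Supp(E_S)$, nor does Lemma~\ref{lem:technical} apply since $\pi(z) \notin Y$. With the paper's choice $(m+e)E$, the support is exactly $E$ and the cosupport verification reduces cleanly to points $z \in E$, handled via Lemma~\ref{lem:technical} and a single subset $S'_z \in \Phi$. The divisor $G$ is then built from Lemma~\ref{lem:Wilson} applied once, with no need to ``combine sections from other decompositions as $S$ varies over $\Phi$''; only the one local subset $S'_z$ enters.
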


\begin{remark}
  When $m > 0$, the line bundle $L \otimes \sOX\bigl( -(m+1)D_{s_1} - D_{s_2} -
  \dotsb - D_{s_e} \bigr)$ depends on the choice of the first element $s_1$ not
  just the underlying set.  In particular, the hypothesis ``for all subsets
  $\{s^{}_1, \dotsc, s^{}_e \} \in \Phi$'' means all subsets together with all
  choices of a first element $s^{}_1$.
\end{remark}

\begin{proof}
  Since $X$ and $Y$ are nonsingular, Lemma~4.3.16 in \cite{PAGI} shows that
  \[
  H^i \bigl( X, \sI_{Y}^{\; m+1} \otimes \KX \otimes L \bigr) = H^i \bigl( X',
  \pi^*( \KX ) \otimes \pi^*(L) \otimes \sOXp \bigl(- (m+1) E \bigr) \bigr) \, ,
  \quad \text{for all $i > 0$.}
  \]
  We also have $\KXp \cong \pi^*(\KX) \otimes \sOXp\bigl((e-1)E\bigr)$ because
  $Y$ is nonsingular of codimension $e$.
  Therefore, it is enough to show that $H^i \bigl( X', \KXp \otimes \pi^*(L)
  \otimes \sOXp \bigl(-(e+m)E\bigr) \!\bigr) = 0$ for all $i > 0$.  By setting
  $B := \pi^*(L) \otimes \sOXp\bigl( -(e+m)E \bigr)$, we need to show that $H^i
  \bigl( X', \KXp \otimes B \bigr) = 0$ for all $i > 0$, so it suffices to prove
  that $B$ and $(e+m)E$ satisfy the three conditions in
  Lemma~\ref{lem:vanishing}.

  The first two conditions assert that $B$ is big and that $B \otimes
  \sOXp\bigl( (e+m)E \bigr)$ is big and nef.  By hypothesis, the line bundle $L
  \otimes \sOXp(-m D_{s_1} - \sum_{j=1}^{e} D_{s_j})$ is big and nef for any $\{
  s^{}_1, \dotsc, s^{}_e \} \in \Phi$.  Hence, by decomposing $B$ as the product
  of a big line bundle and an effective line bundle
  \[
  B = \pi^*\bigl( L \otimes \sOX( - m \, D^{}_{s_1} -
  \textstyle\sum\nolimits_{j=1}^{e} D_{s_j}) \bigr) \otimes \bigl( \sOXp( m \,
  F^{}_{s_1} + \textstyle\sum\nolimits_{j=1}^{e} F_{s_j}) \bigr) \, ,
  \] 
  we see that $B$ is a big line bundle.
  Similarly, $B \otimes \sOXp\bigl((e+m)E \bigr)$ is the product of a big, nef
  line bundle, and a nef line bundle
  \[
  B \otimes \sOXp\bigl((e+m)E \bigr) = \pi^*(L) = \pi^*\bigl( L \otimes \sOX (
  -m \, D_{s_1} - \textstyle\sum\nolimits_{j=1}^{e} D_{s_j}) \bigr) \otimes
  \pi^*\bigl( \sOX( m \, D_{s_1} + \textstyle\sum\nolimits_{j=1}^{e} D_{s_j} )
  \bigr) \, ,
  \] 
  so the line bundle $B \otimes \sOXp\bigl( (e+m)E \bigr)$ is big and nef.

  To establish the third condition from Lemma~\ref{lem:vanishing}, we must show
  that the closed subschemes of $X'$ defined by $\sJ(\norm{B})$ and
  $\sOXp\bigl(-(e+m)E \bigr)$ are disjoint.  Fix a point $z \in E = \Supp\bigl(
  (e+m)E \bigr)$.  By Lemma~\ref{lem:technical}, there exists a subset
  $\{s^{}_1, \dotsc, s^{}_e \} \subseteq \Phi$ such that $z \not\in F_{s_1}$ and
  $F_{s_2} + \dotsb + F_{s_e}$ has normal crossings at $z$.  Since $\pi^*\bigl(L
  \otimes \sOX(-m \, D_{s_1} - \sum_{j=1}^{e} D_{s_j}) \bigr)$ is a big, nef
  line bundle, Lemma~\ref{lem:Wilson} provides a positive integer $k_0$ and an
  effective divisor $\widetilde{D}$ such that the linear series
  \[
  \left| \sOXp \!\bigl( m \, F_{s_1} + \textstyle\sum\nolimits_{j=1}^{e} F_{s_j}
    \bigr) \otimes \pi^*(L^{\otimes k}) \otimes \sOXp \! \bigl( - k m \,
    \pi^*(D_{s_1}) - k \textstyle\sum\nolimits_{j=1}^{e} \pi^*(D^{}_{s_j}) -
    \widetilde{D} \bigr) \right|
  \]
  has no base points for all $k \geq k_0$.  Choose an effective divisor $F'$ in
  this linear series that does not pass through $z$ and, for each $k \geq k_0$,
  consider the effective divisor 
  \[
  G := \tfrac{k-1}{k} \bigl( m \, F_{s_1} + \textstyle\sum\nolimits_{j=1}^{e}
  F_{s_j} \bigr) + \tfrac{1}{k} F' \, .
  \]  
  Since $G$
  has normal crossing support at $z$, the multiplier ideal sheaf $\sJ(G)$ is
  trivial at $z$ (cf.{} Example~9.2.13 in \cite{PAGII}).

  To complete the proof, we relate the multiplier ideal sheaf $\sJ(G)$ to the
  asymptotic multiplier ideal sheaf $\sJ(\norm{B})$.  Since $k \, G \in \bigl|
  B^{\otimes k} \otimes \sOXp(- \widetilde{D}) \bigr|$ and $\bigl| B^{\otimes k}
  \otimes \sOX(-\widetilde{D}) \bigr| \subseteq \bigl| B^{\otimes k} \bigr|$,
  Proposition~9.2.32 in \cite{PAGII} yields 
  \[
  \sJ\bigl( G \bigr) \subseteq \sJ \bigl( \tfrac{1}{k} \bigl| B^{\otimes k}
  \otimes \sOXp(-\widetilde{D}) \bigr| \bigr) \subseteq \sJ \bigl( \tfrac{1}{k}
  \bigl| B^{\otimes k} \bigr| \bigr) \, , \qquad \text{for all $k \geq k_0$.}
  \]  
  The definition of the asymptotic multiplier ideal
  implies that $\sJ \bigl( \frac{1}{k} \bigl|B^{\otimes k} \bigr|
  \bigr) \subseteq \sJ (\norm{B})$.  Hence, for sufficiently large integers $k$,
  we have $\sJ(G) \subseteq \sJ (\norm{B})$, so the asymptotic multiplier ideal
  sheaf $\sJ (\norm{B})$ must also be trivial at the point $z \in E$.  In other
  words, the closed subschemes of $X'$ defined by $\sJ(\norm{B})$ and
  $\sOXp\bigl(-(e+m)E \bigr)$ are disjoint.
\end{proof}

\begin{proof}[Proof of Theorem~\ref{thm:main}]
  Lemma~\ref{lem:technical} shows that, for any $y\in Y$, there exist a subset
  $\{s^{}_1, \dotsc, s^{}_e \} \subseteq \{1, \dotsc, r\}$ for which the induced
  map $\bigoplus_{j=1}^{e} \sO^{}_{X,y}(-D_{s_j}) \longrightarrow \sI^{}_{Y,y}$
  is surjective.  Thus, Theorem~\ref{thm:main} is simply
  Theorem~\ref{thm:semilocal} when $\Phi$ consists of all $e$-element subsets of
  $\{1, \dotsc, r\}$.
\end{proof}

\begin{remark}
  \label{rem:KawamataViehweg}
  When $m = 0$ and $r = 1$, Theorem~\ref{thm:main} specializes to the
  Kawamata-Viehweg vanishing theorem.  Indeed, if $Y \subseteq X$ is an
  effective Cartier divisor, then we have $\sI^{}_Y = \sOX(-Y)$.  Given any big
  and nef line bundle $B$ on $X$, set $L := B\otimes \sOX(Y)$.  In this case,
  Theorem~\ref{thm:main} implies that $H^i(X, \KX \otimes B) = H^i\bigl( X,
  \sI^{}_{Y} \otimes \KX \otimes L \bigr) = 0$ for all $i > 0$.  Consequently,
  one can view our main theorem as a higher-codimensional analogue of the
  Kawamata-Viehweg vanishing theorem.
\end{remark}

Although Theorem~\ref{thm:main} generalizes the Kawamata-Viehweg vanishing
theorem, the next result shows that it is essentially as sharp.  More precisely,
given a witness for the optimality of the Kawamata-Viehweg vanishing theorem, we
obtain a witness for the optimality of Theorem~\ref{thm:main}.

\begin{proposition}
  \label{pro:sharp}
  Let $i$ and $e$ be positive integers.  If $N$ is a nef line bundle on
  $X$ such that $H^{i+e-1}\bigl( X, K_X \otimes N \bigr) \neq 0$, then every
  complete intersection $Y = D^{}_1 \cap \dotsb \cap D^{}_e$ in $X$, where each
  divisor $D_i$ is big and nef, satisfies $H^i \bigl( X, \sI^{}_{Y} \otimes \KX
  \otimes \sOX(D^{}_1 + \dotsb + D^{}_e) \otimes N \bigr) \neq 0$.
\end{proposition}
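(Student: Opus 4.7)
The plan is to exploit the Koszul resolution attached to the complete intersection $Y = D^{}_1 \cap \dotsb \cap D^{}_e$. Since $X$ is nonsingular and $Y$ has codimension equal to the number of defining divisors, the local equations of $D^{}_1, \dotsc, D^{}_e$ form a regular sequence at every point of $Y$, so the Koszul complex on the map $\bigoplus_j \sOX(-D^{}_j) \longrightarrow \sOX$ yields a locally free resolution
\[
0 \longrightarrow \sOX(-D) \longrightarrow \dotsb \longrightarrow \bigoplus_{|S|=k} \sOX\bigl(-\textstyle\sum_{j \in S} D^{}_j \bigr) \longrightarrow \dotsb \longrightarrow \bigoplus_{j=1}^{e} \sOX(-D^{}_j) \longrightarrow \sI^{}_Y \longrightarrow 0 \, ,
\]
where I abbreviate $D := D^{}_1 + \dotsb + D^{}_e$.

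Second, I would tensor this resolution by $L := \KX \otimes \sOX(D) \otimes N$ to obtain an exact sequence whose intermediate terms are direct sums of line bundles of the shape $\KX \otimes \sOX\bigl( \sum_{j \notin S} D^{}_j \bigr) \otimes N$, indexed by proper subsets $S \subsetneq \{1, \dotsc, e\}$. Each such line bundle is $\KX$ twisted by the tensor product of the big, nef line bundles $\sOX(D^{}_j)$ for $j \notin S$ with the nef line bundle $N$, and this twist is itself big and nef. The Kawamata-Viehweg vanishing theorem therefore forces all higher cohomology groups of these intermediate terms to vanish, so that only the leftmost term $\KX \otimes N$ can contribute nontrivially in positive degree.

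The final step is to split the tensored resolution into short exact sequences and chase the resulting long exact sequences in cohomology. Because the intermediate terms are acyclic in positive degrees, the connecting homomorphisms supply a chain of isomorphisms
\[
H^i\bigl(X, \sI^{}_Y \otimes L\bigr) \cong H^{i+1}\bigl(X, Z^{}_1 \bigr) \cong \dotsb \cong H^{i+e-1}\bigl(X, \KX \otimes N \bigr) \, ,
\]
where $Z^{}_k$ denotes the kernel of the $k$-th map in the tensored resolution. By hypothesis, the right-hand side is nonzero, which gives the desired conclusion. Since this proposition asserts only sharpness rather than a deep new vanishing, I do not anticipate a serious obstacle: the main subtlety is simply keeping the indices aligned in the Koszul complex and verifying that every intermediate term falls under the scope of Kawamata-Viehweg, both of which follow immediately from the hypotheses that each $D^{}_j$ is big and nef and that $N$ is nef.
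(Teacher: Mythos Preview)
Your proposal is correct and follows essentially the same approach as the paper: tensor the Koszul resolution of $\sI^{}_Y$ with $\KX \otimes \sOX(D^{}_1 + \dotsb + D^{}_e) \otimes N$, apply Kawamata--Viehweg vanishing to the intermediate terms (each of which is $\KX$ twisted by a big and nef line bundle), and chase the resulting long exact sequences to obtain $H^i\bigl(X,\sI^{}_Y \otimes L\bigr) \cong H^{i+e-1}\bigl(X,\KX \otimes N\bigr)$. The only minor imprecision is that your ``proper subsets $S \subsetneq \{1,\dotsc,e\}$'' should really be proper \emph{nonempty} subsets, but this does not affect the argument.
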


\begin{proof}
  Fix a complete intersection $Y = D^{}_1 \cap \dotsb \cap D^{}_e$ in $X$ such
  that each divisor $D^{}_i$ is big and nef.  Let $\binom{[e]}{k}$ denote
  the set of all $k$-element subsets of $[e] := \{1, \dotsc, e\}$.  Tensoring
  the Koszul complex associated to $Y$ with the line bundle $\KX \otimes
  \sOX(D_1 + \dotsb + D_e) \otimes N$ yields the exact sequence
  \begin{multline*}
    0 \longrightarrow \KX \otimes N \longrightarrow \bigoplus_{\{ s_1 \} \in
      \binom{[e]}{1}} \KX \otimes \sOX(D_{s_1}) \otimes N \longrightarrow
    \bigoplus_{\{s_1,s_2\} \in \binom{[e]}{2}} \KX \otimes \sOX(D_{s_1} +
    D_{s_2}) \otimes N \longrightarrow \dotsb \\
    \dotsb \longrightarrow \bigoplus_{\{s_1, \dotsc, s_{e-1} \} \in
      \binom{[e]}{e-1}} \KX \otimes \sOX \Bigl( \textstyle\sum\limits_{j=1}^{e-1}
    D_{s_j} \Bigr) \otimes N \longrightarrow \sI^{}_{Y} \otimes \KX \otimes \sOX
    \Bigl( \textstyle\sum\limits_{j=1}^{e} D_j \Bigr) \otimes N \longrightarrow
    0 \, .
  \end{multline*}
  For any $1 \leq k \leq e$ and any nonempty subset $\{ s^{}_1, \dotsc, s^{}_k
  \} \subseteq \{1, \dotsc, e \}$, the Kawamata-Viehweg vanishing theorem shows
  that $H^i\bigl(X, \KX \otimes \sOX( \sum_{j=1}^{k} D_{s_j}) \otimes N \bigr) =
  0$ for all $i > 0$.  Chopping this exact sequence into short exact ones and
  taking the associated long exact sequences in cohomology (cf.{}
  Proposition~B.1.2 in \cite{PAGI}), we conclude that
  \[
  H^i \bigl( X, \sI^{}_{Y} \otimes \KX \otimes \sOX(D^{}_1 + \dotsb + D^{}_e)
  \otimes N \bigr) = H^{i+e-1}(X, \KX \otimes N) \neq 0 \, . \qedhere
  \]
\end{proof}

The next example shows that Theorem~\ref{thm:main} is optimal for a very general
line bundle $L$ on a product of projective spaces.

\begin{example}
  \label{exa:product}
  Let $X = \PP^{n_1} \times \dotsb \times \PP^{n_\ell}$ and write $\sOX(m_1,
  \dotsc, m_\ell) := \sO_{\!  \PP^{n_1}}(m_1) \boxtimes \dotsb \boxtimes \sO_{\!
    \PP^{n_\ell}}(m_\ell)$.  With this notation, we have $\KX = \sOX(-n_1-1,
  \dotsc, -n_\ell-1)$.  Moreover, $\sOX(m_1, \dotsc, m_\ell)$ is nef (resp.{}
  big) if and only if $m_j \geq 0$ (resp.{} $m_j > 0$) for all $1 \leq j \leq
  \ell$.  Consider, the line bundle $N = \sOX(0, m_2, \dotsc, m_\ell)$ where
  $m_j \geq n_j + 1$ for all $2 \leq j \leq \ell$.  In particular, $N$ is nef
  but not big.  The K\"unneth formula gives $H^{n_1}(X,\KX \otimes N) \neq 0$.
  Thus, for any $1 \leq e \leq n_1$ and any complete intersection $Y = D^{}_1
  \cap \dotsb \cap D^{}_e$ in $X$ where each divisor $D_j$ is big and nef,
  Proposition~\ref{pro:sharp} establishes that $H^{n_1 - e +1} \bigl( X,
  \sI^{}_{Y} \otimes \KX \otimes \sOX(D^{}_1 + \dotsb + D^{}_e) \otimes N \bigr)
  \neq 0$.  By permuting the factors of $X$, it follows that, for any line
  bundle $L$ sufficiently far into the relative interior of a facet of the nef
  cone, one cannot weaken the hypotheses on $L$ in Theorem~\ref{thm:main} and
  continue to have all of the higher cohomology groups vanish. \xqed{\Diamond}
\end{example}

For some special classes of nonsingular varieties, there are vanishing results
that improve upon the Kawamata-Viehweg vanishing theorem.  Given
Remark~\ref{rem:KawamataViehweg}, we cannot expect Theorem~\ref{thm:main} to be
uniformly optimal for all ambient varieties $X$.
 
\begin{question}
  Is it possible to strengthen Theorem~\ref{thm:main} by exploiting some
  properties of $X$?
\end{question}

\section{Multigraded Castelnuovo-Mumford regularity}

\setcounter{equation}{0}

\noindent
This section proves Corollary~\ref{cor:reg} which bounds the multigraded
Castelnuovo-Mumford regularity of $\sI^{}_Y$.  To assess this corollary, we
include a couple of examples.  Throughout the section, we measure multigraded
Castelnuovo-Mumford regularity with respect to the free divisors $P^{}_1,
\dotsc, P^{}_\ell$ on $X$ such that some positive linear combination is very
ample.

\begin{proof}[Proof of Corollary~\ref{cor:reg}]
  We must verify that $H^{i} \bigl( \sI^{}_Y \otimes \KX \otimes L \otimes N
  \otimes \sOX( -u^{}_1 \, P^{}_{1} - \dotsb - u^{}_{\ell} \, P^{}_{\ell})
  \bigr) = 0$ for all $i > 0$ and all $(u_1, \dotsc, u_\ell) \in \NN^{\ell}$
  satisfying $u_1 + \dotsb + u_{\ell} = i$.  For $i > \dim(X)$, this is
  immediate 
  and, for $i \leq \dim(Y)+1$, it follows from Theorem~\ref{thm:main}.  To
  establish the remaining vanishings, tensor the exact sequence $0
  \longrightarrow \sI^{}_Y \longrightarrow \sOX \longrightarrow \sO^{}_Y
  \longrightarrow 0$ with $A := \KX \otimes L \otimes N \otimes
  \sOX(\sum_{j=1}^{\ell} - u_j \, P_j)$ and take cohomology to obtain the exact
  sequence
  \begin{equation}
    \label{exa:longexact}
    \dotsb \longrightarrow H^{i-1} \bigl( X, \sO^{}_Y \otimes A \bigr)
    \longrightarrow H^{i} \bigl( X, \sI^{}_Y \otimes A \bigr) \longrightarrow
    H^{i} \bigl( X, A \bigr) \longrightarrow \dotsb \, .
  \end{equation}
  We have $H^{i-1} \bigl( X, \sO^{}_Y \otimes A \bigr) = 0$ for all $i - 1 >
  \dim(Y)$.  Since each divisor $D_j$ is a positive linear combination of the
  $P^{}_1, \dotsc, P^{}_\ell$, we have $-u^{}_1 \, P^{}_1 - \dotsb - u^{}_\ell
  \, P^{}_\ell = -D_{s_1} - \dotsb - D_{s_e} - u^{\prime}_1 \, P^{}_1 - \dotsb -
  u^{\prime}_\ell \, P_\ell$ where $(u'_1, \dotsc, u'_\ell) \in \NN^{\ell}$
  satisfies $u'_1 + \dotsb + u'_\ell \leq \dim(Y)$.  Hence, the line bundle
  \[
  L \otimes \sOX(\textstyle\sum\nolimits_{j=1}^{\ell} - u_j \, P_j) = L \otimes
  \sOX(- D_{s_1} - \dotsb - D_{s_e} - u^{\prime}_1 \, P^{}_1 - \dotsb -
  u^{\prime}_\ell \, P^{}_\ell)
  \] 
  is big and nef, so the Kawamata-Viehweg vanishing theorem implies that $H^i
  \bigl( X, A \bigr) = 0$ for all $i > 0$.  Therefore, we have $H^{i}
  \bigl( X, \sI^{}_Y \otimes A \bigr) = 0$ for all $\dim(Y) + 1 < i \leq
  \dim(X)$.
\end{proof}

\begin{remark}
  \label{rem:linear}
  Fix a big, nef line bundle $B$ on $X$ and consider the line bundle 
  \[
  L = B \otimes \sOX\bigl( (d-e+1) \, P_1 + \dotsb + (d-e+1) \,
  P_\ell + D_1 + \dotsb + D_r \bigr) \, ,
  \] 
  where $d := \dim(X)$ and $e := \codim(Y)$.  Corollary~\ref{cor:reg} implies
  that $\sI^{}_Y$ is $K_X \otimes L$-regular, so we have a linear bound on
  multigraded Castelnuovo-Mumford regularity for nonsingular varieties.
\end{remark}

\begin{remark}
  \label{rem:reghyp}
  Assuming that $\sOX$ is $\KX \otimes L \otimes N$-regular with respect to
  $P^{}_1, \dotsc, P^{}_\ell$, the cohomology group $H^i\bigl( X, A \bigr)$
  appearing in \eqref{exa:longexact} vanishes.  Thus, with this alternative
  hypothesis, we do not need to assume that each nef divisor $D_j$ is a positive
  linear combination of the $P^{}_1, \dotsc, P^{}_\ell$.
\end{remark}

When the ambient space is simply projective space, the following example shows
that we recover Corollary~4 in \cite{BEL} (also see Remark~1.8.44 in
\cite{PAGI}).

\begin{example}
  \label{exa:reg}
  Let $X = \PP^d$, $\ell = 1$, and $\sOX(P) = \sOX(1)$.  Since each $D_j$ is
  nef, there is a positive integer $m_j$ such that $D_j \in |m_j P|$ for all $1
  \leq j \leq r$.  We may assume that $m_1 \geq m_2 \geq \dotsb \geq m_r$.
  Thus, the line bundle 
  \[
  L = \sOX(P) \otimes \sOX \bigl( (d-e+1) \, P + D_1 + \dotsb + D_e \bigr) =
  \sOX(d - e + 2 + m_1 + \dotsb + m_e)
  \] 
  satisfies the hypothesis of Corollary~\ref{cor:reg} and $\sI^{}_Y$ is
  $\sOX(m_1 + \dotsb + m_e -e+1)$-regular. \xqed{\Diamond}
\end{example}

The final example shows, perhaps unexpectedly, that the best bounds arising from
Corollary~\ref{cor:reg} do not necessarily come from the ideal-theoretic
equations for $Y$.

\begin{example}
  \label{exa:curve}
  Fix $X = \PP^2 \times \PP^2$ as the ambient variety and choose free divisors
  $P^{}_1$ and $P^{}_2$ on $X$ such that $\sOX(P^{}_1) = \sOX(1,0)$ and
  $\sOX(P^{}_2) = \sOX(0,1)$.  Proposition~6.10 in \cite{MS} shows that $\sOX$
  is $\sOX$-regular with respect to $P^{}_1, P^{}_2$.  Let $Y \subset X$ be the
  image of the map from $\PP^1$ to $\PP^2 \times \PP^2$ given by $[t_0 : t_1]
  \mapsto \bigl( [ t_0^6 : t_0^3 t_1^3 : t_1^6], [ t_0^2 : t^{}_0 t^{}_1 :
  t_1^2] \bigr)$, so $Y$ is nonsingular of codimension $3$.  By composing this
  map with a Segre embedding of $\PP^2 \times \PP^2$ in $\PP^8$, we obtain a
  Veronese embedding of $\PP^1$ in $\PP^8$.  If $S := \CC[x_0,x_1,x_2,y_0,
  y_1,y_2]$, where $\deg(x_i) = \left[
    \begin{smallmatrix} 
      1 \\ 0 
    \end{smallmatrix} 
  \right] \in \ZZ^2$ and  $\deg(y_j) = \left[ 
    \begin{smallmatrix} 
      0 \\ 1 
    \end{smallmatrix} 
  \right] \in \ZZ^2$, is the Cox ring or total coordinate ring of $X$ and $J :=
  (x_0,x_1,x_2) \cap (y_0,y_1,y_2)$ is the irrelevant ideal, then the
  $J$-saturated $S$-ideal associated to $Y$ is $I^{}_Y = (f_0,\dotsc, f_5)$
  where
  \begin{xalignat*}{3}
    f_0 &= x_1^2 - x_0 x_2 \, , &
    f_2 &= x_2 y_0 y_1 - x_1 y_2^2 \, , &
    f_4 &= x_2 y_0^2 - x_1 y_1 y_2 \, , \\ 
    f_1 &= y_1^2 - y_0 y_2 \, , &
    f_3 &= x_1 y_0 y_1 - x_0 y_2^2 \, , &
    f_5 &= x_1 y_0^2 - x_0 y_1 y_2 \, .
  \end{xalignat*}
  Since $\deg(f_0) = \left[
    \begin{smallmatrix} 
      2 \\ 0 
    \end{smallmatrix} 
  \right]$, $\deg(f_1) = \left[
    \begin{smallmatrix} 
      0 \\ 2 
    \end{smallmatrix} 
  \right]$, and $\deg(f_2) = \dotsb = \deg(f_5) = \left[
    \begin{smallmatrix} 
      1 \\ 2 
    \end{smallmatrix} 
  \right]$, Corollary~\ref{cor:reg} together with Remark~\ref{rem:reghyp} imply
  that $\sI^{}_Y = \widetilde{I^{}_Y}$ is $\sOX(4,6)$-regular.  On the other
  hand, the intersection of prime ideal $I_Y$ and the $(y_0, y_1, y_2)$-primary
  ideal $(y_2^2, y_1 y_2, y_1^2 - y_0 y_2, y_0 y_1, y_0^2)$ equals
  $(f_1,\dotsc,f_5)$ which implies that $Y$ is defined scheme-theoretically by
  the last five equations.  Using this smaller collection of equations, we see
  that $\sI^{}_Y$ is $\sOX(3,6)$-regular.  However, Theorem~A in \cite{Loz}
  establishes that $\sI^{}_Y$ is also $\sOX(1,5)$-regular because the curve $Y$
  has bidegree $(6,2)$.  \xqed{\Diamond}
\end{example}

Example~\ref{exa:curve} also shows that better bounds can be obtained by using
additional numerical invariants.  The following seems like an excellent place to
start exploring this phenomena.

\begin{question}
  Suppose that $Y$ is a nonsingular curve embedded in a nonsingular toric
  variety $X$.  What are the optimal bounds on the multigraded
  Castelnuovo-Mumford regularity of $Y$ in terms of its multidegree?
\end{question}

\section{Maps arising from the global sections of adjoint bundles}

\noindent
The goal of this section is to provide sufficient conditions for the
surjectivity of both the canonical multiplication map between the global
sections of adjoint bundles and the Wahl maps arising from adjoint bundles.  Let
$A^{}_1, \dotsc, A^{}_d$ denote very ample line bundles on $X$ where $d :=
\dim(X)$.

\begin{proof}[Proof of Corollary~\ref{cor:veryample}]
  Tensoring the short exact sequence for the diagonal $\Delta(X) \subseteq X
  \times X$ with $(\KX \otimes L^{}_1) \boxtimes (\KX \otimes L^{}_2) = K^{}_{X
    \times X} \otimes (L^{}_1 \boxtimes L^{}_2)$ yields the exact sequence
  \[
  0 \longrightarrow \sI^{}_{\!  \Delta(X)}\otimes K^{}_{X \times X} \otimes
  (L^{}_1 \boxtimes L^{ }_2) \longrightarrow K^{}_{X \times X} \otimes (L^{}_1
  \boxtimes L^{}_2) \longrightarrow \sO^{}_{\!  \Delta(X)} \otimes K^{}_{X
    \times X} \otimes ( L^{}_1 \boxtimes L^{}_2) \longrightarrow 0 \, .
  \]
  By restricting to the diagonal $\Delta(X)$ and taking global sections, we
  obtain the multiplication map $H^{0}(X, \KX \otimes L^{}_1) \otimes H^{0}(X,
  \KX \otimes L^{}_2) \longrightarrow H^{0}(X, K_X^{\otimes 2} \otimes L^{}_1
  \otimes L^{}_2)$.  Therefore, it suffices to show that $H^{1} \bigl( X \times
  X, \sI^{}_{\!  \Delta(X)} \otimes K^{}_{X\times X} \otimes (L^{}_1 \boxtimes
  L^{}_2) \bigr) = 0$.

  Since each $A_k$ is very ample, the diagonal $\Delta(X) \subseteq X \times X$
  is defined scheme-theoretically by the complete linear series $|A_k \boxtimes
  A_k|$ (cf.{} Lemma~\ref{lem:diagonal}).  As $\Delta(X)$ and $X \times X$ are
  both nonsingular, we can choose, for each point $x \in \Delta(X)$ and each $1
  \leq k \leq d$, a single divisor $P_k \in | A_k \boxtimes A_k |$ such that the
  induced map $\bigoplus_{k=1}^d \sO^{}_{X \times X, x}(-P_k) \rightarrow
  \sI^{}_{\Delta(X),x}$ is surjective.  Thus, the vanishing follows from
  Theorem~\ref{thm:semilocal} because
  \begin{multline*}
    (L^{}_1 \boxtimes L^{}_2) \otimes \bigl( \sOX(-P^{}_{1})
    \boxtimes \sOX(-P^{}_{1}) \bigr) \otimes \dotsb \otimes \bigl(
    \sOX(-P^{}_{d}) \boxtimes \sOX(-P^{}_{d}) \bigr) \\ = \bigl( L^{}_1
    \otimes A^{-1}_{1} \otimes \dotsb \otimes A^{-1}_{d} \bigr) \boxtimes \bigl(
    L^{}_2 \otimes A^{-1}_{1} \otimes \dotsb \otimes A^{-1}_{d} \bigr)
  \end{multline*}
  is big and nef.
\end{proof}

\begin{remark}
  Corollary~\ref{cor:veryample} specializes to Variant~3.2 in \cite{BEL} when
  $A_1 = \dotsc = A_d$.  On the other hand, Lazarsfeld points out that one can
  obtain another proof for Corollary~\ref{cor:veryample} by adapting the
  argument for Variant~3.2 in \cite{BEL}.
\end{remark}

To determine if an adjoint bundle is very ample, we have the following simple
observation.

\begin{corollary}
  \label{cor:normal}
  If $A^{}_{1}, \dotsc, A^{}_{d+2}$ are very ample line bundles on $X$ and $N$
  is a nef line bundle on $X$, then the line bundle $\KX \otimes A^{}_{1}
  \otimes \dotsb \otimes A^{}_{d+2} \otimes N$ is very ample and defines a
  projectively normal embedding.
\end{corollary}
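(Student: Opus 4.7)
The plan is to split the claim into two parts: very ampleness of $\KX \otimes L$, where $L := A_1 \otimes \dotsb \otimes A_{d+2} \otimes N$, and projective normality of the embedding it defines. Once very ampleness is in hand, projective normality follows immediately from Corollary~\ref{cor:veryample} applied with $L_1 = L_2 = L$, because the required twist
\[
L \otimes A_1^{-1} \otimes \dotsb \otimes A_d^{-1} = A_{d+1} \otimes A_{d+2} \otimes N
\]
is the product of an ample line bundle and a nef one, hence ample.

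For very ampleness, I would first show that the intermediate bundle $\KX \otimes L'$ is globally generated, where $L' := A_1 \otimes \dotsb \otimes A_{d+1} \otimes N$. Given any point $y \in X$, because each $A_k$ is very ample, a Bertini-type argument applied to the linear subsystem of $|A_k|$ through $y$ produces, for each $1 \leq k \leq d$, a divisor $D_k \in |A_k|$ that passes through $y$, is smooth there, and meets the previously chosen $D_1, \dotsc, D_{k-1}$ transversally at $y$; a further generic choice makes the complete intersection $Y := D_1 \cap \dotsb \cap D_d$ a reduced zero-dimensional subscheme of $X$ containing $y$. I would then apply Theorem~\ref{thm:semilocal} with $m = 0$ and $\Phi = \{\{1, \dotsc, d\}\}$ to $L'$: since $L' \otimes \sOX(-D_1 - \dotsb - D_d)$ is numerically equivalent to $A_{d+1} \otimes N$, which is ample, the theorem gives $H^1(X, \sI^{}_Y \otimes \KX \otimes L') = 0$. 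The restriction sequence for $Y \subset X$ then shows that $H^0(X, \KX \otimes L')$ surjects onto $H^0\bigl(Y, (\KX \otimes L')|_Y\bigr)$, and because $Y$ is a disjoint union of reduced points containing $y$, this supplies a section not vanishing at $y$. Thus $\KX \otimes L'$ is globally generated; tensoring with the very ample $A_{d+2}$ then yields that $\KX \otimes L = (\KX \otimes L') \otimes A_{d+2}$ is very ample, by the standard fact that the tensor product of a globally generated line bundle with a very ample one is very ample.

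The main obstacle will be verifying the Bertini construction of $Y$: for each $y \in X$ the very ample bundles $A_1, \dotsc, A_d$ must simultaneously supply divisors through $y$ that are smooth at $y$, meet transversally there, and whose global complete intersection is reduced and zero-dimensional. Smoothness at $y$ of a generic $D_k \in |A_k|$ through $y$ follows because very ampleness of $A_k$ implies that its subsystem through $y$ still separates tangent vectors at $y$; transversality at $y$ is arranged by choosing the tangent hyperplanes of the $D_k$ at $y$ inductively to be linearly independent; and standard Bertini arguments promote these local conditions to global smoothness of $Y$ at every remaining intersection point.
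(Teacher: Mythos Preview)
Your argument is correct, and the reduction steps---projective normality via Corollary~\ref{cor:veryample}, then very ampleness of $\KX \otimes L$ via global generation of $\KX \otimes L'$ tensored with the very ample $A_{d+2}$---match the paper exactly. The difference lies in how global generation of $B := \KX \otimes L'$ is established. The paper proceeds by induction on $d = \dim X$: the base case $d=1$ is a degree count on a curve, and for $d>1$ one chooses a single smooth irreducible $H \in |A_{d+1}|$ through the given point, uses Kawamata--Viehweg vanishing on $B \otimes A_{d+1}^{-1}$ to obtain a surjection $H^0(X, B) \twoheadrightarrow H^0(H, K_H \otimes A_1 \otimes \dotsb \otimes A_d \otimes N)$, and invokes the inductive hypothesis on $H$. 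You instead cut all the way down to dimension zero in one stroke and apply Theorem~\ref{thm:semilocal} directly to the resulting complete intersection. Both routes work; the paper's has the small virtue that the global-generation step needs only the classical Kawamata--Viehweg theorem rather than the paper's own main result, while yours avoids the induction and the attendant adjunction bookkeeping. One minor point worth flagging: Theorem~\ref{thm:semilocal} is phrased for a nonsingular \emph{subvariety} $Y$, and your $Y$ is typically a reducible finite set of points; the proof of that theorem nowhere uses irreducibility of $Y$ (only smoothness and pure codimension), so your application goes through, but you should say so explicitly.
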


\begin{proof}
  By applying Corollary~\ref{cor:veryample}, it is enough to show that $\KX
  \otimes A_{1} \otimes \dotsb \otimes A_{d+2} \otimes N$ is very ample.  Since
  the tensor product of a globally generated line bundle and a very ample one is
  also very ample, it suffices to prove that $B := \KX \otimes A_{1} \otimes
  \dotsb \otimes A_{d+1} \otimes N$ is globally generated.  To accomplish this,
  we proceed by induction on $d := \dim(X)$.

  Suppose that $d = 1$.  On a smooth curve $C$ of genus $g$, a divisor is ample
  (nef) if and only if it has positive (nonnegative) degree and a divisor of
  degree at least $2g$ has no base point.
  Since $\deg(K^{}_C) = 2g-2$, it follows that $K^{}_C \otimes A_1 \otimes A_2
  \otimes N$ is globally generated.

  Now suppose that $d > 1$ and fix a point $x \in X$.  The line bundle $A_{d+1}$
  is very ample, so Bertini's Theorem 
  provides a smooth irreducible divisor $H \in |A_{d+1}|$ passing through $x$.
  Tensoring the short exact sequence $0 \longrightarrow \sOX(-H) \longrightarrow
  \sOX \longrightarrow \sO^{}_H \longrightarrow 0$ with $B$ yields
  \[
  0 \longrightarrow B \otimes A^{-1}_{d+1} \longrightarrow B \longrightarrow
  K^{}_H \otimes A^{}_1 \otimes \dotsb \otimes A^{}_{d} \otimes N
  \longrightarrow 0 \, .
  \]
  Since $B \otimes A^{-1}_{d+1} = \KX \otimes A_{1} \otimes \dotsb \otimes A_{d}
  \otimes N$, the Kawamata-Viehweg vanishing theorem shows that $H^1\bigl( X, B
  \otimes A^{-1}_{d+1} \bigr) = 0$.  Hence, the map $H^{0} \bigl( X, B \bigr)
  \longrightarrow H^{0} \bigl(H, K^{}_H \otimes A^{}_1 \otimes \dotsb \otimes
  A^{}_{d} \otimes N \bigr)$ is surjective.  The induction hypothesis ensures
  that there exists a global section of $K^{}_H \otimes A^{}_1 \otimes \dotsb
  \otimes A^{}_{d} \otimes N$ that does not vanish at the point $x$.  By
  choosing a preimage in $H^{0} \bigl( X, B \bigr)$, we obtain a global section
  of $B$ that does not vanish at $x$.
\end{proof}

To give alternative conditions for the surjectivity of the multiplication map,
fix globally generated line bundles $\sOX(P^{}_1), \dotsc, \sOX(P^{}_\ell)$ on
$X$ such that the associated complete linear series define a closed embedding $X
\longrightarrow \PP^{n_1} \times \dotsb \times \PP^{n_\ell}$.  Loosely speaking,
we view the line bundles $\sOX(P^{}_1), \dotsc, \sOX(P^{}_\ell)$ as a reasonable
factorization of the very ample line bundle $\sOX(P^{}_1 + \dotsb + P^{}_\ell)$.

\begin{proposition}
  \label{pro:adjoint}
  If $L^{}_1$ and $L^{}_2$ are line bundles on $X$ such that $L^{}_j \otimes
  \sOX(-u^{}_1 \, P^{}_{1} - \dotsb - u^{}_\ell \, P^{}_{\ell})$ is big and nef
  for all $j$ and all $(u_1, \dotsc, u_\ell) \in \NN^\ell$ such that $u_1 +
  \dotsb + u_\ell = d$ and $0 \leq u_k \leq \binom{n_k +1}{2}$, then the
  multiplication map $H^{0} \bigl( X, \KX \otimes L^{}_1 \bigr) \otimes H^{0}
  \bigl( X, \KX \otimes L^{}_2 \bigr) \longrightarrow H^{0} \bigl(X,
  K_X^{\otimes 2} \otimes L^{}_1 \otimes L^{}_2 \bigr)$ is surjective.
\end{proposition}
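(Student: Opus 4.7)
The plan is to mimic the proof of Corollary~\ref{cor:veryample}, replacing the role played there by the very ample line bundles $A^{}_1, \dotsc, A^{}_d$ with that of the globally generated line bundles $\sOX(P^{}_1), \dotsc, \sOX(P^{}_\ell)$ whose complete linear series jointly embed $X$ into $\PP^{n_1} \times \dotsb \times \PP^{n_\ell}$. First, I tensor the ideal sheaf sequence of the diagonal $\Delta(X) \subseteq X \times X$ with $K^{}_{X \times X} \otimes (L^{}_1 \boxtimes L^{}_2) = (\KX \otimes L^{}_1) \boxtimes (\KX \otimes L^{}_2)$ and pass to cohomology; exactly as in Corollary~\ref{cor:veryample}, the surjectivity of the multiplication map then reduces to the vanishing $H^1 \bigl( X \times X, \sI^{}_{\Delta(X)} \otimes K^{}_{X \times X} \otimes (L^{}_1 \boxtimes L^{}_2) \bigr) = 0$, which I plan to derive from Theorem~\ref{thm:semilocal}.

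Next, I produce a scheme-theoretic description of $\Delta(X)$ suited to Theorem~\ref{thm:semilocal}. For each $k$, the diagonal of $\PP^{n_k}$ is scheme-theoretically defined by the $\binom{n_k+1}{2}$ divisors corresponding to the $2 \times 2$ minors of the generic $2 \times (n_k+1)$ matrix of coordinates, each lying in $|\sO(1,1)|$ on $\PP^{n_k} \times \PP^{n_k}$. Pulling back through the closed embedding $X \times X \hookrightarrow \prod_{k} \PP^{n_k} \times \prod_k \PP^{n_k}$ yields, for every $k$, a collection of $\binom{n_k+1}{2}$ divisors in $|P^{}_k \boxtimes P^{}_k|$ on $X \times X$, and together these divisors scheme-theoretically define $\Delta(X)$. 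Since $X \times X$ and $\Delta(X)$ are nonsingular, the diagonal is a local complete intersection of codimension $d$, so at every point $z \in \Delta(X)$ some subset of $d$ of these pulled-back divisors locally generates $\sI^{}_{\Delta(X), z}$. Let $\Phi$ consist of all such $d$-element subsets; then any element of $\Phi$ uses exactly $u^{}_k$ divisors from $|P^{}_k \boxtimes P^{}_k|$ with $\sum_{k=1}^{\ell} u^{}_k = d$ and $0 \leq u^{}_k \leq \binom{n_k+1}{2}$.

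Finally, applying Theorem~\ref{thm:semilocal} to $X \times X$ with $m = 0$, codimension $e = d$, and $L$ replaced by $L^{}_1 \boxtimes L^{}_2$, the required big-and-nef condition for a subset in $\Phi$ with composition $(u^{}_1, \dotsc, u^{}_\ell)$ becomes that $\bigl( L^{}_1 \otimes \sOX(-\sum_k u^{}_k P^{}_k) \bigr) \boxtimes \bigl( L^{}_2 \otimes \sOX(-\sum_k u^{}_k P^{}_k) \bigr)$ is big and nef on $X \times X$, which holds if and only if each tensor factor is big and nef on $X$---precisely the hypothesis of the proposition. I expect the main obstacle to be the middle step: verifying that the pulled-back $2 \times 2$ minors truly scheme-theoretically define $\Delta(X)$ and that at every point one can extract $d$ of them whose composition respects the bound $u^{}_k \leq \binom{n_k+1}{2}$. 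This is the analogue of Lemma~\ref{lem:diagonal} for globally generated line bundles defining a product embedding, and should follow by combining the standard description of diagonals in the projective factors with closedness of $X \hookrightarrow \prod_k \PP^{n_k}$.
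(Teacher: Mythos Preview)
Your proposal is correct and follows essentially the same route as the paper: reduce to the vanishing of $H^1\bigl(X \times X, \sI^{}_{\Delta(X)} \otimes K^{}_{X \times X} \otimes (L^{}_1 \boxtimes L^{}_2)\bigr)$ via the ideal-sheaf sequence, describe $\Delta(X)$ scheme-theoretically by the pulled-back $2 \times 2$ minors (this is exactly Lemma~\ref{lem:diagonal}), and then apply the main vanishing theorem. The only cosmetic difference is that you invoke Theorem~\ref{thm:semilocal} with a tailored collection $\Phi$, whereas the paper invokes Theorem~\ref{thm:main} directly; since the hypothesis of the proposition already covers every admissible composition $(u^{}_1,\dotsc,u^{}_\ell)$, the two formulations yield the same verification.
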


Before proving this proposition, we record a multigraded extension of Lemma~3.1
in \cite{BEL}.

\begin{lemma}
  \label{lem:diagonal}
  The diagonal $\Delta(X) \subseteq X \times X$ is defined scheme-theoretically
  by divisors obtained from $\binom{n_k+1}{2}$ sections of $\sOX(P_k) \boxtimes
  \sOX(P_k)$ for each $1\leq k \leq \ell$.
\end{lemma}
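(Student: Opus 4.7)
The plan is to bootstrap from the single-factor case of Lemma~3.1 in \cite{BEL} via the closed embedding $\iota \colon X \hookrightarrow \PP := \PP^{n_1} \times \dotsb \times \PP^{n_\ell}$ defined by the complete linear series $|P^{}_1|, \dotsc, |P^{}_\ell|$.  The diagonals $\Delta \colon X \longrightarrow X \times X$ and $\Delta \colon \PP \longrightarrow \PP \times \PP$ fit into a Cartesian square with horizontal arrows $\iota$ and $\iota \times \iota$, because $\iota$ is a closed immersion; equivalently, $\Delta(X)$ is the scheme-theoretic preimage of $\Delta(\PP)$ under $\iota \times \iota$.  Consequently, the induced map $(\iota \times \iota)^{*}\sI^{}_{\Delta(\PP)} \twoheadrightarrow \sI^{}_{\Delta(X)}$ is surjective, so any scheme-theoretic defining system of sections for $\Delta(\PP)$ pulls back to one for $\Delta(X)$.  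Let $H^{}_k$ denote the pullback to $\PP$ of $\sO^{}_{\PP^{n_k}}(1)$ via the projection onto the $k$-th factor; by construction $\iota^{*} H^{}_k = \sOX(P^{}_k)$, so pulling back via $\iota \times \iota$ sends sections of $H^{}_k \boxtimes H^{}_k$ to sections of $\sOX(P^{}_k) \boxtimes \sOX(P^{}_k)$.  Thus the problem reduces to proving the lemma for $\PP$ itself.

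For $\PP$, the identification $\PP \times \PP \cong \prod_{k=1}^{\ell}(\PP^{n_k} \times \PP^{n_k})$ obtained by rearranging factors carries $\Delta(\PP)$ to $\prod_{k=1}^{\ell} \Delta(\PP^{n_k})$.  The standard identity for the ideal of a product of closed subschemes yields $\sI^{}_{\Delta(\PP)} = \sum_{k=1}^{\ell} \rho_k^{*} \sI^{}_{\Delta(\PP^{n_k})}$, where $\rho^{}_k \colon \PP \times \PP \longrightarrow \PP^{n_k} \times \PP^{n_k}$ is the projection onto the two copies of the $k$-th factor.  This reduces the question to exhibiting scheme-theoretic defining sections for each $\Delta(\PP^{n_k})$ that live in $\sO^{}_{\PP^{n_k}}(1) \boxtimes \sO^{}_{\PP^{n_k}}(1)$, because their pullbacks via $\rho^{}_k$ are sections of $H^{}_k \boxtimes H^{}_k$.

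For each single projective space, Lemma~3.1 in \cite{BEL} supplies exactly what is needed: the diagonal $\Delta(\PP^{n_k}) \subseteq \PP^{n_k} \times \PP^{n_k}$ is defined scheme-theoretically by the $\binom{n_k+1}{2}$ bilinear forms $x^{(k)}_i y^{(k)}_j - x^{(k)}_j y^{(k)}_i$ indexed by $0 \leq i < j \leq n_k$, namely the $2 \times 2$ minors of the matrix of homogeneous coordinates on the two factors, and these are sections of $\sO^{}_{\PP^{n_k}}(1) \boxtimes \sO^{}_{\PP^{n_k}}(1)$.  Assembling these sections over all $k$ and pulling them back successively through $\rho^{}_k$ and then $\iota \times \iota$ delivers the required $\binom{n_k+1}{2}$ sections of $\sOX(P^{}_k) \boxtimes \sOX(P^{}_k)$ for each $1 \leq k \leq \ell$.

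The main obstacle is ensuring that every reduction is scheme-theoretic rather than merely set-theoretic.  The Cartesian-square step handles the descent from $X$ to $\PP$ uniformly, while the reduction from $\PP$ to the individual factors requires only the standard local computation that the ideal of a product of closed subschemes is the sum of the pullbacks from each factor; in affine coordinates this amounts to the observation that the generators $z^{(k)}_j - w^{(k)}_j$ of $\sI^{}_{\Delta(\PP)}$ decompose naturally as pullbacks from the individual factors $\PP^{n_k} \times \PP^{n_k}$.
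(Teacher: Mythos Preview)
Your proof is correct and follows essentially the same approach as the paper: reduce from $X$ to $\PP^{n_1} \times \dotsb \times \PP^{n_\ell}$ via the closed embedding (so that $\Delta(X)$ is the scheme-theoretic intersection of $\Delta(\PP)$ with $X \times X$), then reduce to the individual factors $\PP^{n_k} \times \PP^{n_k}$, and finally invoke the known description of the diagonal of a single projective space by the $\binom{n_k+1}{2}$ bilinear $2 \times 2$ minors. The only cosmetic differences are that the paper cites Exercise~13.15.b in \cite{Eis} rather than Lemma~3.1 in \cite{BEL} for the single-factor case, and that you spell out the Cartesian-square and product-ideal arguments more explicitly than the paper does.
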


\begin{proof}
  Since the line bundles $\sOX(P_1), \dotsc, \sOX(P_\ell)$ give a closed
  embedding $X \subseteq \PP^{n_1} \times \dotsb \times \PP^{n_\ell}$, we have
  $X \times X \subseteq \bigl( \PP^{n_1} \times \dotsb \times \PP^{n_\ell}
  \bigr) \times \bigl( \PP^{n_1} \times \dotsb \times \PP^{n_\ell} \bigr)$ and
  $\Delta(X) = \Delta(\PP^{n_1} \times \dotsb \times \PP^{n_\ell}) \cap (X
  \times X)$.  Each $\Delta(\PP^{n_k}) \subseteq \PP^{n_k} \times \PP^{n_k}$ is
  defined ideal-theoretically by $\binom{n_k+1}{2}$ sections of
  $\sO^{}_{\PP^{n_k}}(1) \boxtimes \sO^{}_{\PP^{n_k}}(1)$ (e.g., see
  Exercise~13.15.b in \cite{Eis}).  Thus, the diagonal $\Delta(\PP^{n_1} \times
  \dotsb \times \PP^{n_\ell})$ is defined scheme-theoretically by entire
  collection of sections.  The claim then follows by pulling-back to $X \times
  X$.
\end{proof}

\begin{proof}[Proof of Proposition~\ref{pro:adjoint}]
  Just as in the proof of Corollary~\ref{cor:veryample}, it suffices to show
  that 
  \[
  H^{1} \bigl( X \times X, \sI^{}_{\!  \Delta(X)} \otimes K^{}_{X\times X}
  \otimes (L^{}_1 \boxtimes L^{}_2) \bigr) = 0 \, .
  \]
  Lemma~\ref{lem:diagonal} shows that $\Delta(X)$ is defined
  scheme-theoretically by divisors obtained from $\binom{n_k+1}{2}$ sections of
  $\sOX(P^{}_k) \boxtimes \sOX(P^{}_k)$ for $1 \leq k \leq \ell$.  Moreover, the
  divisor
  \begin{multline*}
    (L^{}_1 \boxtimes L^{}_2) \otimes \bigl( \sOX(-P^{}_{1}) \boxtimes
    \sOX(-P^{}_{2}) \bigr)^{\otimes u_1} \otimes \dotsb \otimes \bigl(
    \sOX(-P^{}_{\ell}) \boxtimes \sOX(-P^{}_{\ell}) \bigr)^{\otimes u_\ell} \\=
    \bigl( L^{}_1 \otimes \sOX(-u^{}_1 \, P^{}_{1} - \dotsb - u^{}_\ell \,
    P^{}_{\ell}) \bigr) \boxtimes \bigl( L^{}_2 \otimes \sOX(-u^{}_1 \, P^{}_{1}
    - \dotsb - u^{}_{\ell} \, P^{}_{\ell}) \bigr)
  \end{multline*}
  is big and nef for each $(u_1, \dotsc, u_\ell) \in \NN^\ell$ such that $u_1 +
  \dotsb + u_\ell = d$ and $0 \leq u_k \leq \binom{n_k +1}{2}$.  Therefore, the
  required vanishing follows from Theorem~\ref{thm:main}.
\end{proof}

\begin{remark}
  \label{rem:surjective}
  Whenever $\ell = 1$, we again recover Variant~3.2 in \cite{BEL} from
  Proposition~\ref{pro:adjoint}. 
\end{remark}

The first example in this section demonstrates that, for some varieties,
Proposition~\ref{pro:adjoint} can yield stronger results than
Corollary~\ref{cor:veryample} or Variant~3.2 in \cite{BEL}.

\begin{example}
  \label{exa:comparative}
  Let $X$ be the blow-up of $\PP^d$ along the intersection of two hyperplanes.
  In other words, $X$ is the toric variety obtain by blowing-up $\PP^d$ along a
  codimension-two torus orbit closure (e.g., see Proposition~3.3.15 in
  \cite{CLS}).  If $E$ is the exceptional divisor and $H$ is the proper
  transform of a hyperplane, then $\sOX(E)$ and $\sOX(H)$ form a basis for
  $\Pic(X)$ with $\KX = \sOX\bigl( -(d+1)H + E \bigr)$.  The numerical classes
  of $P^{}_1 := H-E$ and $P^{}_2 := H$ generate the nef cone $\Nef(X)$, and the
  numerical classes $[E]$ and $[P^{}_1]$ generated the pseudoeffective cone of
  $X$.  Moreover, the complete linear series associated to $\sOX(P^{}_1)$ and
  $\sOX(P^{}_2)$ define a closed embedding $X \longrightarrow \PP^1 \times
  \PP^d$; in particular, we have $n_1+1 = \dim H^0\bigl(X, \sOX(P^{}_1) \bigr) =
  2$ and $n_2 +1 = \dim(H^0\bigl(X,\sOX(P^{}_2) \bigr) = d+1$.  Hence,
  Proposition~\ref{pro:adjoint} shows that the multiplication map $H^0\bigl( X,
  L_1 \bigl) \otimes H^0 \bigl( X , L_2 \bigr) \longrightarrow H^0 \bigl( X, L_1
  \otimes L_2 \bigr)$ is surjective provided that each $L_j$ has the form
  $\sOX(P^{}_2) \otimes N$ for some nef line bundle $N$ (see the shaded-region
  in Figure~\ref{fig:blowup}).  In contrast, Corollary~\ref{cor:veryample} or
  Variant~3.2 in \cite{BEL} apply only to adjoint bundles of the form
  $\sOX\bigl( (d-1)P^{}_1\bigr) \otimes N$ where $N$ is a nef line
  bundle. \xqed{\Diamond}
\end{example}

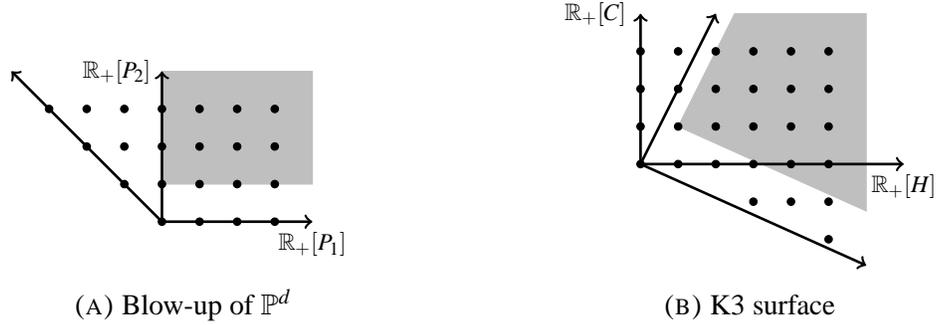
\begin{figure}[t]
  \begin{subfigure}[b]{0.45\textwidth}
    \begin{center}
      \begin{tikzpicture}[scale=1]
        \filldraw[color=lightgray] (0,0.5) -- (2,0.5) -- (2,2) -- (0,2) --
        (0,0.5);
        \draw [->, line width=1pt] (0,0) -- (-2,2);
        \draw [->, line width=1pt] (0,0) -- (0,2);
        \draw [->, line width=1pt] (0,0) -- (2,0);   
        \foreach \y in {0,...,3} {
          \foreach \x in { -\y,...,3} {
            \node [circle, draw, color=black, fill=black, inner sep=1pt] at
            (\x*0.5,\y*0.5) {};}}
        \node[color=black] at (2,-0.3) {\scriptsize $\RR_+ [P_1]$};
        \node[color=black] at (-0.6,2) {\scriptsize $\RR_+ [P_2]$};
      \end{tikzpicture}
    \end{center}
    \caption{Blow-up of $\PP^d$}
    \label{fig:blowup}
  \end{subfigure}
  \begin{subfigure}[b]{0.45\textwidth}
    \begin{center}
      \begin{tikzpicture}[scale=1]
        \filldraw[color=lightgray] (0.5,0.5) -- (1.25,2) -- (3,2) -- 
        (3,-0.449489742783178*2.5+0.5) -- (0.5,0.5);
        \draw [->, line width=1pt] (0,0) -- (3,-0.449489742783178*3);
        \draw [->, line width=1pt] (0,0) -- (0,2);
        \draw [->, line width=1pt] (0,0) -- (1,2);
        \draw [->, line width=1pt] (0,0) -- (3.5,0);   
        \foreach \y in {0,...,3} {
          \foreach \x in {0,...,5} {
            \node [circle, draw, color=black, fill=black, inner sep=1pt] at
            (\x*0.5,\y*0.5) {};}}
        \node [circle, draw, color=black, fill=black, inner sep=1pt] at
        (1.5,-0.5) {};
        \node [circle, draw, color=black, fill=black, inner sep=1pt] at
        (2,-0.5) {};
        \node [circle, draw, color=black, fill=black, inner sep=1pt] at
        (2.5,-0.5) {};
        \node [circle, draw, color=black, fill=black, inner sep=1pt] at
        (2.5,-1) {};
        \node[color=black] at (3.5,-0.3) {\scriptsize $\RR_+ [H]$};
        \node[color=black] at (-0.6,2) {\scriptsize $\RR_+ [C]$};
      \end{tikzpicture}
    \end{center}
    \caption{K3 surface}
    \label{fig:K3}
  \end{subfigure}
  \caption{N\'eron-Severi groups}
\end{figure}

Our second example shows the reverse: Corollary~\ref{cor:veryample} can be
stronger than Proposition~\ref{pro:adjoint}.

\begin{example}
  \label{exa:K3}
  Let $X \subseteq \PP^3$ be a nonsingular quartic surface that contains a
  rational quartic curve $C$ (for existence, see Theorem~1 in \cite{Mori}).  Let
  $H$ be a plane section of $X$.  Since $X$ is a K3 surface, we have $\KX =
  \sOX$.  Remark~4 in \cite{Mori} indicates that $H^2 = 4$, $H.C = 4$, $C^2 =
  -2$, and $\Pic(X) = \ZZ H + \ZZ C$.  In addition, Theorem~2 in \cite{Kov}
  implies that the cone of curves on $X$ is generated by numerical classes of
  $C$ and $H + (2 - \sqrt{6})C$.  Since $(H+2C).C = 0$, we deduce that the nef
  cone $\Nef(X)$ is generated by numerical classes $[H+2C]$ and $[H +
  (2-\sqrt{6})C]$.  Using results of Saint-Donat (e.g., see Theorem~5 in
  \cite{Mori}), we also see that $H$, $H + C$ are very ample and $H+2C$ has no
  base points.  For $H$ and $H+C$, Corollary~\ref{cor:veryample} shows that the
  multiplication map $H^0\bigl( X, L_1 \bigl) \otimes H^0 \bigl( X , L_2 \bigr)
  \longrightarrow H^0 \bigl( X, L_1 \otimes L_2 \bigr)$ is surjective provided
  that both $[L_j]$ lies in the cone $Q = [2H+C] + \Nef(X)$ (see the
  shaded-region in Figure~\ref{fig:K3}).  In contrast, applying
  Proposition~\ref{pro:adjoint} to $P^{}_1 := H+2C$ and $P^{}_2 := H+C$ only
  shows that the multiplication map is surjective when each $[L_j]$ lies in the
  cone
  \begin{align*}
    & \relphantom{=} \bigl( [2P^{}_1] + \Nef(X) \bigr) \cap \bigl( [P^{}_1 +
    P^{}_2] + \Nef(X) \bigr) \cap \bigl( [2P^{}_2] + \Nef(X) \bigr) \\ &= \Bigl[
    \Bigl(2 + \tfrac{2}{\sqrt{6}} \Bigr) H + \Bigl(2 + \tfrac{4}{\sqrt{6}}
    \Bigr) C \Bigr] + \Nef(X) \subseteq Q \, .
  \end{align*}
  In fact, no matter how we choose of the divisors $P_1, \dotsc, P_\ell$,
  Proposition~\ref{pro:adjoint} will not improve upon
  Corollary~\ref{cor:veryample} in this case.  \xqed{\Diamond}
\end{example}

By combining Example~\ref{exa:comparative} and Example~\ref{exa:K3}, we see that
the relative strengths of Corollary~\ref{cor:veryample} and
Proposition~\ref{pro:adjoint} are incomparable.  However, there is an obvious
common refinement in which one factors each very ample line bundle into
appropriate globally generated line bundles; we leave the details to the
interested reader.
  
Although one can use the ideas from Corollary~\ref{cor:veryample} or
Proposition~\ref{pro:adjoint} together with the methods in \cite{Ina} to say
something about the higher syzygies of $X$, we expect these results to be far
from optimal.  We suspect that following question points in a more fruitful
direction.

\begin{question}
  Is there an analogue of Theorem~1 in \cite{EL} which mirrors either
  Corollary~\ref{cor:veryample} or Proposition~\ref{pro:adjoint}?
\end{question}

To exhibit the utility of the parameter $m$ in our main theorem, we conclude
this section with the following result.

\begin{proposition}
  \label{pro:Wahl}
  If $L^{}_1$ and $L^{}_2$ are line bundles on $X$ such that each $L^{}_j
  \otimes \sOX(-u^{}_1 \, P^{}_{1} - \dotsb - u_\ell \, P^{}_{\ell})$ is big and
  nef for all $j=1,2$ and all $(u_1, \dotsc, u_\ell) \in \NN^\ell$ with $u_1 +
  \dotsb + u_\ell = d+m$ and $0 \leq u_k \leq \binom{n_k +1}{2}$, then the
  $m$-th Wahl map
  \[
  H^{0} \bigl( X \times X, \sI^{\; m}_{\Delta(X)} \otimes K^{}_{X\times X}
  \otimes (L^{}_1 \boxtimes L^{}_2) \bigr) \longrightarrow H^{0} \bigl( X,
  \Sym^m (\Omega_{X}^1) \otimes K^{\otimes 2}_X \otimes L^{}_1 \otimes L^{}_2
  \bigr)
  \] 
  is surjective.
\end{proposition}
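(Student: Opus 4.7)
The plan is to mirror the argument for Proposition~\ref{pro:adjoint}, but now to look one step deeper in the $\sI^{}_{\Delta(X)}$-adic filtration on $X \times X$ in order to extract the $m$-th Wahl map. First I would consider the short exact sequence
\[
0 \longrightarrow \sI^{m+1}_{\Delta(X)} \longrightarrow \sI^{m}_{\Delta(X)} \longrightarrow \sI^{m}_{\Delta(X)} / \sI^{m+1}_{\Delta(X)} \longrightarrow 0
\]
on $X \times X$. Since $\Delta(X)$ is a nonsingular subvariety whose conormal sheaf is canonically identified with $\Omega^{1}_{X}$, the quotient is isomorphic to $\Sym^{m}(\Omega^{1}_{X})$, regarded as a sheaf on $\Delta(X) \cong X$. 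Tensoring with $K^{}_{X \times X} \otimes (L^{}_1 \boxtimes L^{}_2)$ and using that this line bundle restricts to $K^{\otimes 2}_X \otimes L^{}_1 \otimes L^{}_2$ along the diagonal, the induced long exact sequence in cohomology exhibits the $m$-th Wahl map as the arrow on $H^0$ coming from the right-hand surjection. Its surjectivity therefore follows from the vanishing
\[
H^1 \bigl( X \times X, \sI^{m+1}_{\Delta(X)} \otimes K^{}_{X \times X} \otimes (L^{}_1 \boxtimes L^{}_2) \bigr) = 0 \, .
\]

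Second, I would establish this vanishing by applying Theorem~\ref{thm:main} on $X \times X$ to the codimension-$d$ nonsingular subvariety $\Delta(X)$ with the integer $m$ and the line bundle $L^{}_1 \boxtimes L^{}_2$. By Lemma~\ref{lem:diagonal}, the diagonal is defined scheme-theoretically by nef divisors drawn, for each $1 \leq k \leq \ell$, from $\binom{n_k + 1}{2}$ sections of $\sOX(P^{}_k) \boxtimes \sOX(P^{}_k)$. For any $d$-element subset $\{s^{}_1, \dotsc, s^{}_d\}$ of these defining divisors together with a distinguished first element, letting $k^{}_j$ denote the factor index so that $D^{}_{s_j} \in |\sOX(P^{}_{k_j}) \boxtimes \sOX(P^{}_{k_j})|$, the K\"unneth structure factors the relevant twist as
\[
(L^{}_1 \boxtimes L^{}_2) \otimes \sO^{}_{X \times X} \bigl( -(m+1) D^{}_{s_1} - D^{}_{s_2} - \dotsb - D^{}_{s_d} \bigr) = \bigl( L^{}_1 \otimes \sOX(-u^{}_1 P^{}_1 - \dotsb - u^{}_\ell P^{}_\ell) \bigr) \boxtimes \bigl( L^{}_2 \otimes \sOX(-u^{}_1 P^{}_1 - \dotsb - u^{}_\ell P^{}_\ell) \bigr) \, ,
\]
where $u^{}_k$ records the total multiplicity at which $P^{}_k$ is subtracted, so that $\sum^{}_k u^{}_k = d + m$ and each $u^{}_k$ falls within the range permitted by the hypothesis of the proposition. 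Because an exterior tensor product of line bundles is big and nef exactly when each factor is, the hypothesis ensures that this twist is big and nef on $X \times X$, and Theorem~\ref{thm:main} delivers the desired vanishing.

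The main obstacle is the first step, where one must verify that the conormal identification $\sI^{m}_{\Delta(X)} / \sI^{m+1}_{\Delta(X)} \cong \Sym^{m}(\Omega^{1}_{X})$ is compatible with the twist by $K^{}_{X \times X} \otimes (L^{}_1 \boxtimes L^{}_2)$ in such a way that the induced map in cohomology really is the $m$-th Wahl map of the statement. Once this identification is in place, the combinatorial bookkeeping for the multiplicities $(u^{}_1, \dotsc, u^{}_\ell)$ is routine, and the remainder of the argument proceeds exactly as in the proof of Proposition~\ref{pro:adjoint}.
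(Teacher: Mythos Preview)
Your plan is exactly the paper's: use the short exact sequence $0 \to \sI^{\; m+1} \to \sI^{\; m} \to \Sym^m(\Omega^1_X)\otimes \sO^{}_{\Delta(X)} \to 0$ for $\sI := \sI^{}_{\Delta(X)}$, tensor with $K^{}_{X\times X} \otimes (L^{}_1 \boxtimes L^{}_2)$, identify the induced map on $H^0$ with the $m$-th Wahl map, reduce surjectivity to the vanishing of $H^1\bigl(X\times X, \sI^{\; m+1} \otimes K^{}_{X\times X} \otimes (L^{}_1 \boxtimes L^{}_2)\bigr)$, and then appeal to Lemma~\ref{lem:diagonal} together with Theorem~\ref{thm:main}. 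The paper says only that the vanishing ``follows by combining'' these two results; you spell out the K\"unneth factorization that the paper leaves implicit, and your remark about the ``main obstacle'' is precisely what the paper handles by citing \cite{Wahl} for the definition of the Wahl map.

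One caution about the bookkeeping you describe as routine. For $m > 0$, Theorem~\ref{thm:main} requires subtracting $(m+1)D^{}_{s_1} + D^{}_{s_2} + \dotsb + D^{}_{s_d}$, so if the distinguished element $s^{}_1$ has factor index $k^{}_1$ then the resulting multiplicity is $u^{}_{k_1} = v^{}_{k_1} + m$, where $v^{}_{k_1}$ counts the selected divisors of that type; this can be as large as $\binom{n_{k_1}+1}{2} + m$. Hence your assertion that ``each $u^{}_k$ falls within the range permitted by the hypothesis of the proposition'' is not literally correct, since the stated bound is $u^{}_k \le \binom{n_k+1}{2}$. The paper's proof does not address this verification at all, and the mismatch lies with the stated upper bound in the proposition rather than with your strategy: once one coordinate is allowed to exceed $\binom{n_k+1}{2}$ by $m$ (or the upper bound is simply dropped), your argument goes through verbatim.
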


\begin{proof}
  Following Section~1 in \cite{Wahl}, the $m$-th Wahl maps is defined as
  follows.  For brevity, set $\sI := \sI^{}_{\! \Delta(X)}$.  Since $\sI/\sI^{\;
    2}$ is isomorphic to $\Omega_X^1$ as an $\sO^{}_{\Delta(X)}$-module and
  $\sI^{\; m} / \sI^{\; m+1} \cong \Sym^m(\Omega_X^1)$, there is an exact
  sequence $0 \longrightarrow \sI^{\; m+1} \longrightarrow \sI^{\; m}
  \longrightarrow \Sym^m (\Omega_X^1) \otimes \sO^{}_{\!  \Delta(X)}
  \longrightarrow 0$.  Tensoring with $K^{}_{X \times X} \otimes (L^{}_1
  \boxtimes L^{}_2) = (\KX \otimes L^{}_1) \boxtimes (\KX \otimes L^{}_2)$ and
  taking global sections yields the $m$-th Wahl map
  \[
  H^{0} \bigl( X \times X, \sI^{\; m} \otimes K^{}_{X \times X} \otimes (L^{}_1
  \boxtimes L^{}_2) \bigr) \longrightarrow H^{0} \bigl( X, \Sym^m (\Omega_{X}^1)
  \otimes K^{\otimes 2}_X \otimes L^{}_1 \otimes L^{}_2 \bigr) \bigr) \, .
  \]
  To show that it is surjective, it suffices to prove that $H^{1} \bigl( X
  \times X, \sI^{\; m+1} \otimes K^{}_{X \times X} \otimes (L^{}_1 \boxtimes
  L^{}_2) \bigr)$ vanishes which follows by combining Lemma~\ref{lem:diagonal}
  and Theorem~\ref{thm:main}.
\end{proof}

\begin{remark}
  We recover Corollary~3.4 in \cite{BEL} from Proposition~\ref{pro:Wahl} when
  $m=1$, $\ell = d+m$ and $P_1 = \dotsb = P_{d+m}$.  The version of
  Proposition~\ref{pro:Wahl} that mimics Corollary~\ref{cor:veryample} is left
  to the interested reader.
\end{remark}

\section{Vanishing for Vector Bundles}

\noindent
This section presents the proof of our Griffiths-type vanishing theorem for
vector bundles.  Throughout, $E$ denotes a vector bundle of rank $e$ on $X$
which is a quotient of a direct sum of line bundles $\sOX(D^{}_1) \oplus \dotsb
\oplus \sOX(D^{}_r) \longrightarrow E$ where each divisor $D_i$ is nef.

\begin{proof}[Proof of Proposition~\ref{pro:Griffiths}]
  Let $\pi \colon X' := \PP(E) \longrightarrow X$ be the projective bundle of
  one-dimensional quotients of $E$ and let $\sOXp(1)$ be the tautological line
  bundle.  From the cotangent bundle sequence for $\pi$ and the relative Euler
  sequence, we obtain $\KXp = \pi^*\bigl( \KX \otimes \det(E) \bigr) \otimes
  \sOXp(-e)$,
  and the projection formula 
  gives $\pi^{}_* \bigl( \KXp \otimes \sOXp(m+e) \otimes \pi^*(L) \bigr) = \KX
  \otimes \det(E) \otimes \Sym^m (E) \otimes L$.
  Since $m \geq 0$, we have $\mathbf{R}^i \pi^{}_* \bigl( \sOXp(m) \bigr) = 0$
  for all $i > 0$
  , so 
  \[
  H^{i} \bigl( X', \KXp \otimes \sOXp(m+e) \otimes \pi^*(L) \bigr) = H^{i}
  \bigl( X, \KX \otimes \det(E) \otimes \Sym^m (E) \otimes L \bigr) \, , \quad
  \text{for all $i$.}
  \] 
  Setting $B := \pi^*(L) \otimes \sOXp(m+e)$, it suffices to show that $H^{i}
  \bigl( X', \KXp \otimes B) = 0$ for all $i > 0$.

  Since $E$ is as a quotient of $\bigoplus_{j=1}^r \sOX(D_j)$, there is a map
  $\sOX(D_j) \longrightarrow E$ for each $1 \leq j \leq r$.  Let $F_j \in \bigl|
  \sOXp(1) \otimes \sOXp\bigl(- \pi^*(D_j) \bigr) \bigr|$ be the effective
  divisor on $X'$ associated to the global section $\sOX \longrightarrow
  \Sym^1(E) \otimes \sOX(-D_i)$.  We see that $B$ is big from the decomposition
  \[
  B = \pi^* \bigl( L \otimes \sOXp ( m \, D_1 + \textstyle\sum\nolimits_{j=1}^e
  D_j) \bigr) \otimes \sOXp( m \, F_1 + \textstyle\sum\nolimits_{j=1}^{e} F_j )
  \, .
  \] 
  Hence, Theorem~11.2.12 (ii) in \cite{PAGII} implies that $H^i \bigr(X', \KXp
  \otimes B \otimes \sJ(\norm{B}) \bigr) = 0$ for all $i > 0$ and we need only
  show that $\sJ(\norm{B}) = \sOXp$.

  To achieve this, we claim that, for each $z \in X'$, there is a subset
  $\{s^{}_1, \dotsc, s^{}_e\} \subseteq \{1, \dotsc, r\}$ such that $z \not\in
  F_{s_1}$ and $F_{s_2} + \dotsb + F_{s_e}$ has normal crossings at $z$ (cf.{}
  Lemma~\ref{lem:technical}).  Indeed, there exists an affine neighbourhood $U$
  of $\pi(z)$ such that $\pi^{-1}(U) \cong \Spec \bigl( \sOX(U) [t_1, \dotsc,
  t_e] \bigr)$.  By shrinking $U$ if necessary, we see that there is a subset
  $\{ s^{}_1, \dotsc, s^{}_e \} \subseteq \{1, \dotsc, r\}$ such that the local
  equations corresponding to the sections $\sOX \longrightarrow \Sym^1(E)
  \otimes \sOX(- D_{s_j})$ form a regular system of parameters in the ring
  $\sOX(U) [t_1, \dotsc, t_e]$.  Hence, we may assume that $z \not\in F_{s_1}$.
  Since $\sOX(U)[t_1, \dotsc, t_e]$ is a polynomial ring, we also deduce that
  $F_{s_2} + \dotsb + F_{s_e}$ has normal crossings at $z$.

  Now, the line bundle $\pi^*\bigl(L \otimes \sOX( m \, D_{s_1} + \sum_{j=1}^{e}
  D_{s_j}) \bigr)$ is a big and nef, so Lemma~\ref{lem:Wilson} yields a positive
  integer $k_0$ and an effective divisor $\widetilde{D}$ on $X'$ such that the
  linear series
  \[
  \left| \sOXp(m \, F_{s_1} + \textstyle\sum\nolimits_{j=1}^e F_{s_j}) \otimes
    \pi^*(L^k) \otimes \sOXp \bigl( km \, \pi^*(D_{s_1}) + k \,
    \textstyle\sum\nolimits_{j=1}^{e} \pi^*(D_{s_j}) - \widetilde{D} \bigr)
  \right|
  \]
  has no base points for all $k \geq k_0$.  Choose an effective divisor $F'$ in
  this linear series that does not pass through $z$, and for each $k \geq k_0$,
  consider the effective divisor $G := \frac{k-1}{k} \bigr( m \, F_{s_1} +
  \sum\nolimits_{j=1}^e F_{s_j} \bigr) + \frac{1}{k} F'$.  Since $G$ has normal
  crossing support at $z$, $\sJ(G)$ is trivial at $z$.
  Finally, we relate the multiplier ideal sheaf $\sJ(G)$ to the asymptotic
  multiplier ideal sheaf $\sJ(\norm{B})$ as in the proof of
  Theorem~\ref{thm:main}. \qedhere
\end{proof}

We end with an uncomplicated example illustrating
Proposition~\ref{pro:Griffiths}.

\begin{example}
  Let $X = \PP^{n_1} \times \dotsb \times \PP^{n_\ell}$.  If the divisor $D_k$
  is the pull-back of the hyperplane from the $k$-factor $\PP^{n_k}$, then the
  Euler sequence (e.g., see Theorem~8.1.6 in \cite{CLS}) implies that the
  tangent bundle $T^{}_X$ is a quotient of the direct sum
  $\bigoplus_{k=1}^{\ell} \bigoplus_{j=1}^{n_k+1} \sOX(D_k)$.  Since each line
  bundle $\sOX(D_k)$ is nef and $\det(T^{}_X)$ is the anticanonical line bundle
  on $X$, Proposition~\ref{pro:Griffiths} shows that $H^i\bigl( X,
  \Sym^m(T^{}_X) \otimes B \bigr) = 0$ for all $i > 0$, all $m \geq 0$, and all
  big, nef line bundles $B$.  If $n_j \geq \ell$ for all $1 \leq j \leq \ell$,
  then $H^i\bigl( X, \Sym^m(T^{}_X) \otimes N \bigr) = 0$ for all $i > 0$, all
  $m \geq 0$, and all nef line bundles $N$.

\end{example}

\section*{Acknowledgements}

\noindent
We thank Tommaso de Fernex, Rob Lazarsfeld, and Mike Roth for helpful
discussions.  Both authors were partially supported by NSERC.

\begin{bibdiv}
\begin{biblist}

\bib{Ber}{article}{
  label={Ber},
  author={Bertram, Aaron},
  title={An application of a log version of the Kodaira
    vanishing theorem to embedded projective varieties},
  status={available at \href{http://arxiv.org/abs/alg-geom/9707001}%
    {\texttt{arXiv:alg-geom/9707001v1}}}
}

\bib{BEL}{article}{
  author={Bertram, Aaron},
  author={Ein, Lawrence},
  author={Lazarsfeld, Robert},
  title={\href{http://dx.doi.org/10.2307/2939270}%
    {Vanishing theorems, a theorem of Severi, and the}  
    \href{http://dx.doi.org/10.2307/2939270}%
    {equations defining projective varieties}},
  journal={J. Amer. Math. Soc.},
  volume={4},
  date={1991},
  number={3},
  pages={587--602},
}

\bib{CU}{article}{
  author={Chardin, Marc},
  author={Ulrich, Bernd},
  title={\href{http://www.jstor.org/stable/25099154}%
    {Liaison and Castelnuovo-Mumford regularity}},
  journal={Amer. J. Math.},
  volume={124},
  date={2002},
  number={6},
  pages={1103--1124},
}
	
\bib{CK}{book}{
  author={Cox, David A.},
  author={Katz, Sheldon},
  title={Mirror symmetry and algebraic geometry},
  series={Mathematical Surveys and Monographs},
  volume={68},
  publisher={American Mathematical Society},
  place={Providence, RI},
  date={1999},
  pages={xxii+469},
}

\bib{CLS}{book}{
  author={Cox, David A.},
  author={Little, John B.},
  author={Schenck, Henry K.},
  title={Toric varieties},
  series={Graduate Studies in Mathematics},
  volume={124},
  publisher={American Mathematical Society},
  place={Providence, RI},
  date={2011},
}

\bib{EL}{article}{
  author={Ein, Lawrence},
  author={Lazarsfeld, Robert},
  title={\href{http://dx.doi.org/10.1007/BF01231279}%
    {Syzygies and Koszul cohomology of smooth projective varieties of}
    \href{http://dx.doi.org/10.1007/BF01231279}%
    {arbitrary dimension}},
  journal={Invent. Math.},
  volume={111},
  date={1993},
  number={1},
  pages={51--67},
}

\bib{Eis}{book}{
  label={Eis},
  author={Eisenbud, David},
  title={Commutative algebra with a view toward algebraic geometry},
  series={Graduate Texts in Mathematics},
  volume={150},
  publisher={Springer-Verlag},
  place={New York},
  date={1995},
  pages={xvi+785},
}

\bib{FE}{article}{
  label={dFE},
  author={de Fernex, Tommaso},
  author={Ein, Lawrence},
  title={\href{http://dx.doi.org/10.1353/ajm.2010.0008}%
    {A vanishing theorem for log canonical pairs}},
  journal={Amer. J. Math.},
  volume={132},
  date={2010},
  number={5},
  pages={1205--1221},
}


\bib{HSS}{article}{
  author={Hering, Milena},
  author={Schenck, Hal},
  author={Smith, Gregory G.},
  title={\href{http://dx.doi.org/10.1112/S0010437X0600251X}%
    {Syzygies, multigraded regularity and toric varieties}},
  journal={Compos. Math.},
  volume={142},
  date={2006},
  number={6},
  pages={1499--1506},
}

\bib{HK}{article}{
  author={Hu, Yi},
  author={Keel, Sean},
  title={\href{http://dx.doi.org/10.1307/mmj/1030132722}%
    {Mori dream spaces and GIT}},
  journal={Michigan Math. J.},
  volume={48},
  date={2000},
  pages={331--348},
}

\bib{Kov}{article}{
  label={Kov},
  author={Kov{\'a}cs, S{\'a}ndor J.},
  title={\href{http://dx.doi.org/10.1007/BF01450509}%
    {The cone of curves of a $K3$ surface}},
  journal={Math. Ann.},
  volume={300},
  date={1994},
  number={4},
  pages={681--691},
}

\bib{Ina}{article}{
  label={Ina},
  author={Inamdar, S. P.},
  title={\href{http://dx.doi.org/10.2140/pjm.1997.177.71}%
      {On syzygies of projective varieties}},
  journal={Pacific J. Math.},
  volume={177},
  date={1997},
  number={1},
  pages={71--76},
}

\bib{PAGI}{book}{
  label={PAG1},
  author={Lazarsfeld, Robert},
  title={Positivity in algebraic geometry. I},
  series={Ergebnisse der Mathematik und ihrer Grenzgebiete. 3. Folge.},
  volume={48},
  publisher={Springer-Verlag},
  place={Berlin},
  date={2004},
  pages={xviii+387},
}

\bib{PAGII}{book}{
  label={PAG2},
  author={Lazarsfeld, Robert},
  title={Positivity in algebraic geometry. II},
  series={Ergebnisse der Mathematik und ihrer Grenzgebiete. 3. Folge.},
  volume={49},
  publisher={Springer-Verlag},
  place={Berlin},
  date={2004},
  pages={xviii+385},
}

\bib{Loz}{article}{
  label={Loz},
  author={Lozovanu, Victor},
  title={\href{http://dx.doi.org/10.1016/j.jalgebra.2009.07.013}%
    {Regularity of smooth curves in biprojective spaces}},
  journal={J. Algebra},
  volume={322},
  date={2009},
  number={7},
  pages={2355--2365},
}

\bib{MS}{article}{
  author={Maclagan, Diane},
  author={Smith, Gregory G.},
  title={\href{http://dx.doi.org/10.1515/crll.2004.040}%
    {Multigraded Castelnuovo-Mumford regularity}},
  journal={J. Reine Angew. Math.},
  volume={571},
  date={2004},
  pages={179--212},
}

\bib{Mori}{article}{
  label={Mor},
  author={Mori, Shigefumi},
  title={\href{http://projecteuclid.org/euclid.nmj/1118787649}%
    {On degrees and genera of curves on smooth quartic surfaces in ${\bf P}^3$}},
  journal={Nagoya Math. J.},
  volume={96},
  date={1984},
  pages={127--132},
}

\bib{Tev}{article}{
  label={Tev},
  author={Tevelev, Jenia},
  title={\href{http://dx.doi.org/10.1353/ajm.2007.0029}%
    {Compactifications of subvarieties of tori}},
  journal={Amer. J. Math.},
  volume={129},
  date={2007},
  number={4},
  pages={1087--1104},
}

\bib{Wahl}{article}{
  label={Wah},
  author={Wahl, Jonathan},
  title={On cohomology of the square of an ideal sheaf},
  journal={J. Algebraic Geom.},
  volume={6},
  date={1997},
  number={3},
  pages={481--511},
}

		
\end{biblist}
\end{bibdiv}

\raggedright

\end{document}